\documentclass[11pt]{article}
\usepackage[a4paper,margin=1in]{geometry}
\usepackage{amsmath,amssymb,amsthm,mathtools}
\usepackage{microtype}
\usepackage[colorlinks=true,linkcolor=blue,citecolor=blue,urlcolor=blue]{hyperref}

\newtheorem{theorem}{Theorem}
\newtheorem{corollary}[theorem]{Corollary}
\newtheorem{proposition}[theorem]{Proposition}
\newtheorem{lemma}[theorem]{Lemma}

\theoremstyle{remark}
\newtheorem{remark}[theorem]{Remark}

\newcommand{\D}{\mathcal D}
\newcommand{\Res}{\operatorname{Res}}
\newcommand{\C}{\mathbb C}
\newcommand{\Q}{\mathbb Q}
\newcommand{\norm}[1]{\left\lVert #1\right\rVert}

\title{Rational Bishop determinants and explicit cyclicity criteria:\\
Fourier parity, an infinite zero family, and quantitative gap functions}
\author{MA, YICEN}
\date{August 28, 2026}

\begin{document}
\maketitle

\begin{abstract}
We study finite-fibre determinants for rational Bishop operators and their
role in cyclicity for irrational parameters.  The paper has two main parts.
First, for the constant vector $f=1$, a resultant identity and a discrete
Fourier factorization reveal a determinant parity mechanism for general
modular orbit order: odd denominators give a nonnegative normalized
determinant on the positive fundamental cell, while for even denominators the
unique real alternating Fourier mode is the only factor capable of producing
a sign-changing zero.  We give an exact instance at $(r,q)=(9,16)$ and an
analytic infinite family $(r,q)=(3,6n-2)$.  Grivaux's zero-free determinant is
identified as the consecutive-order subfamily $\D_{1,q}$, so these zeros are
caused specifically by nonconsecutive modular ordering.

Second, we prove an explicit cyclicity criterion that does not assume global
nondegeneracy or monotonicity of the fibre determinant.  A quantitative Remez
estimate controls the small-determinant set; cutoff inverses are approximated
by endpoint-corrected Fej\'er polynomials; and an explicit continuity modulus
transfers the resulting rational approximants to irrational parameters.  This
produces a fully explicit continued-fraction gap function for $f=1$ and, more
generally, for every polynomial $f$ with $f(0)\ne0$.  The argument also gives
the exact degree and leading coefficient of the corresponding
polynomial-vector fibre determinants.
\end{abstract}

\section{Introduction}

Fix $1<p<\infty$.  For $\alpha\in(0,1)$, the Bishop operator on
$L^p[0,1]$ is
\[
 (T_\alpha f)(t)=t f(\{t+\alpha\}),
\]
where $\{\cdot\}$ denotes fractional part.  A vector $f\in L^p[0,1]$ is
\emph{cyclic} for $T_\alpha$ if
\[
 \overline{\operatorname{span}}\{T_\alpha^n f:n\ge0\}
 =\overline{\{Q(T_\alpha)f:Q\in\C[\xi]\}}
 =L^p[0,1].
\]
The operator is called cyclic if it has at least one cyclic vector.  Thus
cyclicity asks whether polynomial combinations of the iterates of a single
function can approximate every element of $L^p[0,1]$; it is distinct from
hypercyclicity, which asks whether the orbit itself is dense.  We use the
standard terminology of linear dynamics; see
\cite{BayartMatheron,GrosseErdmannPeris}.

Bishop proposed this family in connection with the invariant-subspace
problem.  Early and subsequent work on invariant and hyperinvariant subspaces
for Bishop-type operators includes Davie~\cite{Davie},
MacDonald~\cite{MacDonald}, Flattot~\cite{Flattot}, and the more recent work of
Chamizo, Gallardo-Guti\'errez, Monsalve-L\'opez, and Ubis~\cite{ChamizoEtAl};
see also the monograph of Chalendar and Partington~\cite{ChalendarPartington}
for the broader operator-theoretic context.

B\'ehani studied Bishop operators from this point of view in~\cite{Behani}.
Building on the rational-parameter theory of Chalendar and Partington
\cite[Section~5.4]{ChalendarPartington}, and its multivariable extension by
Chalendar, Pozzi, and Partington~\cite[Section~4]{ChalendarPartingtonPozzi},
he recalled a fibrewise characterization of cyclic vectors for
$\alpha=r/q\in\Q$ and used it to prove, among other results, that every
function holomorphic near $[0,1]$ with $f(0)\ne0$ is cyclic for every rational
Bishop operator.  He then passed from rational to irrational parameters.  If
$(p_n/q_n)$ are the continued-fraction convergents of an irrational
$\alpha$, his Theorem~4.16 gives a function $\psi_f:\mathbb N\to\mathbb R_+$
such that sufficiently frequent gaps
\[
 q_{n+1}>\psi_f(q_n)
\]
imply that $f$ is cyclic for $T_\alpha$.  The proof obtains $\psi_f$
indirectly from finitely many rational approximants and continuity in the
parameter.  B\'ehani therefore asked in Question~5.4 whether $\psi_f$ can be
made explicit, already for the constant vector $f=1$.

We next explain why a finite determinant lies at the centre of both the
rational criterion and this quantitative question.  Let $\alpha=r/q$ with
$(r,q)=1$, and restrict $t$ to the fundamental interval
$I_q=[0,1/q)$.  The rotation $t\mapsto\{t+r/q\}$ has $q$ points in each
fibre.  If one seeks a representation
\[
 g=\sum_{j=0}^{q-1}h_jT_{r/q}^jf
\]
with $1/q$-periodic coefficient functions $h_j$, then evaluating at the $q$
points $t+\nu/q$ gives the linear system
\begin{equation}\label{eq:intro-fibre-system}
 \big(g(t+\nu/q)\big)_{\nu=0}^{q-1}
 =M^f_{r,q}(t)\big(h_j(t)\big)_{j=0}^{q-1},
 \qquad
 M^f_{r,q}(t)_{\nu j}
 =\big(T_{r/q}^jf\big)(t+\nu/q).
\end{equation}
The corresponding determinant
\[
 D^f_{r,q}(t)=\det M^f_{r,q}(t)
\]
is therefore not an auxiliary object: it is exactly the obstruction to
solving the fibre system.  The rational cyclicity theorem of Chalendar and
Partington, in the form used by B\'ehani, states that $f$ is cyclic for
$T_{r/q}$ precisely when this determinant is nonzero almost everywhere
\cite[Section~5.4]{ChalendarPartington}.  For analytic $f$ with $f(0)\ne0$,
it is enough to show that $D^f_{r,q}$ is not identically zero, which follows
from its explicit nonzero value at the origin.

For an \emph{effective} irrational-parameter argument, almost-everywhere
invertibility is not by itself sufficient.  Solving
\eqref{eq:intro-fibre-system} introduces $M^f_{r,q}(t)^{-1}$, whose entries
contain $1/D^f_{r,q}(t)$.  One must control the size and regularity of the
solutions $h_j$, approximate them by polynomials, and then quantify how the
resulting polynomial orbit changes when $r/q$ is replaced by a nearby
irrational parameter.  Small values of the determinant govern every one of
these estimates.  This is why B\'ehani singled out, for $f=1$, the possible
nonvanishing and monotonicity of the determinant on $I_q$ as a route toward
an explicit $\psi_1$.

For $f=1$, the entries in \eqref{eq:intro-fibre-system} are successive orbit
products.  After ordering the rows along the modular orbit and scaling
$z=qt$, the determinant becomes, up to a fixed nonzero factor and a row sign,
\[
 \D_{r,q}(z)=
 \det\left[\prod_{m=0}^{j-1}(z+c_{i+m})\right]_{0\le i,j\le q-1},
 \qquad c_i=ir\pmod q.
\]
This is the polynomial studied in the first part of the paper.  We show that
the proposed global nonvanishing route cannot work in general: for infinitely
many even denominators, $\D_{r,q}$ has a sign-changing zero in the positive
fundamental cell.  A resultant identity and discrete Fourier diagonalization
identify the responsible factor as the unique unpaired alternating Fourier
mode.  Odd denominators, in contrast, have only conjugate pairs of nontrivial
modes and hence a parity-based nonnegativity property.

The failure of global nonvanishing does not obstruct cyclicity.  Our
quantitative replacement uses the exact degree of $D_{r,q}$ and its nonzero
value at the origin.  Remez's inequality then bounds the measure of the set on
which the determinant is small.  We discard that set, invert the fibre matrix
on its complement, approximate the cutoff inverse by endpoint-corrected
Fej\'er polynomials, and use an explicit continuity modulus in $\alpha$.
This produces a completely explicit gap function for $f=1$, and the same
scheme applies to every polynomial $f$ with $f(0)\ne0$.

The determinant problem has recently been studied from a complementary
direction by Grivaux~\cite{Grivaux}.  His work concerns the polynomials
\[
 \mathcal P_n(s,X)=\sum_{k=0}^n s^{(k)}X^{n-k},
 \qquad
 s^{(k)}=s(s+1)\cdots(s+k-1),
\]
and a cyclic-product determinant $\theta_n(s)$.  He proves that all roots of
$\mathcal P_n(s,\cdot)$ are simple for $s>0$, obtains a precise localization
of these roots in angular sectors, and deduces that
\[
 (-1)^{n(n-1)/2}\theta_n(s)>0\qquad(s\ge0).
\]
The relation to the present paper is exact, but restricted to one orbit order:
as shown in Section~\ref{sec:grivaux},
\[
 \theta_{q-1}(z)=\D_{1,q}(z).
\]
Thus Grivaux proves strict nonvanishing for the consecutive-order subfamily
$r=1$.  In contrast, the modular order $c_i=ir\pmod q$ entering a general
rational Bishop operator changes the cyclic products inside the matrix and
cannot be removed by merely permuting its rows.  We show that this reordered
family has sign-changing positive zeros for infinitely many even denominators.
This does not conflict with Grivaux's theorem; it identifies exactly where the
two results diverge.

Our main contributions are as follows.  First, a resultant identity and
discrete Fourier diagonalization isolate the parity mechanism responsible for
positive zeros.  Second, we prove an infinite zero family by an entirely
analytic asymptotic argument.  Third,
we bypass these zeros by a quantitative Remez--Fej\'er construction and obtain
explicit gap functions first for $f=1$ and then for every polynomial $f$ with
$f(0)\ne0$.  Finally, a Fourier--Vandermonde calculation gives the exact degree
and leading coefficient of every polynomial-vector fibre determinant.  The
orbit determinant, its Fourier structure, and its zeros are treated first;
the quantitative cyclicity argument begins in
Section~\ref{sec:determinant-to-cyclicity}, where the scaled determinant is
returned to B\'ehani's fibre system.

\section{The orbit determinant}

Let \(q\ge2\) and \(1\le r<q\) be coprime.  Put
\[
 c_i=ir\pmod q,\qquad 0\le c_i\le q-1,
\]
and extend \(c_i\) periodically.  The integer-scaled, orbit-ordered fibre
determinant is
\begin{equation}\label{eq:determinant}
 \D_{r,q}(z)=
 \det\left[
  \prod_{m=0}^{j-1}(z+c_{i+m})
 \right]_{0\le i,j\le q-1}.
\end{equation}
An empty product is one.  Define
\begin{align}
 V_0(z)&=1,
 &
 V_i(z)&=\prod_{m=0}^{i-1}(z+c_m)\quad(1\le i\le q),\label{eq:V}\\
 F(z)&=V_q(z)=\prod_{a=0}^{q-1}(z+a),
 &
 \widetilde B_z(x)&=\sum_{i=0}^{q-1}V_i(z)x^i.\label{eq:Btilde}
\end{align}
The equality for \(F\) follows because \(c_0,\ldots,c_{q-1}\) is a
permutation of \(0,\ldots,q-1\).
Throughout the paper we write
\[
 \ell_q=\frac{(q-1)(q-2)}2.
\]

\section{The consecutive orbit and Grivaux's determinant}
\label{sec:grivaux}

We record the precise relation with~\cite{Grivaux}.  For
$a=(a_0,\ldots,a_n)$, let $M(a)$ be the $(n+1)\times(n+1)$ matrix whose
row starting at $a_i$ consists of the successive cyclic products of lengths
$0,1,\ldots,n$:
\[
 M(a)_{ij}=\prod_{m=0}^{j-1}a_{i+m},
 \qquad 0\le i,j\le n,
\]
where indices on $a$ are read modulo $n+1$.  Grivaux writes
\[
 \theta_n(s)=\det M(s,s+1,\ldots,s+n).
\]

\begin{proposition}[The common determinant subfamily]
\label{prop:grivaux-relation}
For every $q\ge2$ and every complex number $z$,
\begin{equation}\label{eq:theta-D}
 \D_{1,q}(z)=\theta_{q-1}(z).
\end{equation}
Moreover, if
\[
 \mathcal P_{q-1}(z,X)=\sum_{k=0}^{q-1}z^{(k)}X^{q-1-k},
\]
then
\begin{equation}\label{eq:grivaux-discriminant}
 \operatorname{disc}_X\mathcal P_{q-1}(z,X)
 =\left(\prod_{j=1}^{q-2}z^{(j)}\right)\D_{1,q}(z).
\end{equation}
In particular,
\begin{equation}\label{eq:r1-positive}
 (-1)^{\ell_q}\D_{1,q}(z)>0\qquad(z\ge0).
\end{equation}
\end{proposition}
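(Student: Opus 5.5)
The plan is to prove the three claims in order of difficulty. The identity \eqref{eq:theta-D} is a direct unwinding of definitions. The sign statement \eqref{eq:r1-positive} is then an immediate citation of Grivaux's theorem. The discriminant identity \eqref{eq:grivaux-discriminant} is the only step requiring an actual computation.

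\emph{Step 1: the identity \eqref{eq:theta-D}.} For $r=1$ we have $c_i=i\pmod q$. Hence for $0\le i,j\le q-1$ the $(i,j)$ entry of \eqref{eq:determinant} is
\[
\prod_{m=0}^{j-1}\bigl(z+((i+m)\bmod q)\bigr)=\prod_{m=0}^{j-1}a_{i+m},
\qquad a_k=z+k,
\]
with indices on $a$ read modulo $q=n+1$. This is exactly $M(z,z+1,\ldots,z+q-1)_{ij}$ with $n=q-1$, so $\D_{1,q}(z)=\theta_{q-1}(z)$ as polynomial identities in $z$. This holds in particular for every complex $z$.

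\emph{Step 2: the sign statement \eqref{eq:r1-positive}.} Grivaux proves $(-1)^{n(n-1)/2}\theta_n(s)>0$ for $s\ge0$. With $n=q-1$ we have $n(n-1)/2=(q-1)(q-2)/2=\ell_q$, so Step 1 gives \eqref{eq:r1-positive} at once.

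\emph{Step 3: the discriminant identity \eqref{eq:grivaux-discriminant}, the main obstacle.} If Grivaux's paper already states the relation between $\operatorname{disc}_X\mathcal P_n(s,X)$ and $\theta_n(s)$, I would simply quote it and transfer it through Step 1. Otherwise I would prove it directly, as follows.
\begin{itemize}
\item Note that $\mathcal P_{q-1}(z,X)=X^{q-1}\widetilde B_z(1/X)$ for $r=1$, because $V_i(z)=z^{(i)}$. Its leading coefficient in $X$ is $1$, so its discriminant is $\prod_{k<l}(\xi_k-\xi_l)^2$ over its roots $\xi_1,\ldots,\xi_{q-1}$.
\item Form the generating functions $u_i(x)=\sum_{j=0}^{q-1}M_{ij}x^j$ of the rows. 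The shift relation $M_{i,j+1}=a_iM_{i+1,j}$ gives the cyclic recursion
\[
u_i(x)=1+a_ixu_{i+1}(x)-F(z)x^q .
\]
This should let me express each $u_i$, evaluated at the reciprocal roots $x=1/\xi_k$, in terms of $u_0$ together with explicit factors $a_0\cdots a_{i-1}$.
\item Multiply $M$ by the Vandermonde matrix in the points $1,1/\xi_1,\ldots,1/\xi_{q-1}$. Because $\widetilde B_z=u_0$ vanishes at the $1/\xi_k$, the resulting matrix should be block triangular.
\item Compare determinants. This should produce $\operatorname{disc}\cdot(\text{Vandermonde factors})$ on one side and $\D_{1,q}$ times products of the $a$'s on the other. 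The bookkeeping of these products is where the factor $\prod_{j=1}^{q-2}z^{(j)}$ should emerge.
\end{itemize}
Since both sides of \eqref{eq:grivaux-discriminant} are polynomials in $z$, it suffices to verify the identity for $z$ in a Zariski-dense set. I would therefore work only with generic $z$, for which the roots are distinct and all $a_i\neq0$.

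I expect this normalization to be the hard part: tracking the signs and the exact product of rising factorials through the triangularization. As a consistency check I would verify the small cases $q=2,3$ by hand, and use them to fix any global sign or constant.
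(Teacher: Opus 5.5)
Your Steps 1--2 and the citation branch of Step 3 are exactly the paper's proof: it identifies the $r=1$ matrix with $M(z,z+1,\ldots,z+q-1)$, quotes Grivaux's Proposition~3.3(i) (with $n=q-1$, $s=z$) for the discriminant formula and his Corollary~5.5 for the sign, so your proposal is correct as it stands. One caution about your fallback: as outlined it never uses $r=1$ (the recursion $u_i=1+a_ixu_{i+1}-F(z)x^q$ holds for every orbit order), and the Vandermonde of your multiplier cancels the one produced by the triangular block, so the computation only reproduces the resultant identity of Theorem~\ref{thm:resultant}, namely $(-1)^{\ell_q}\D_{1,q}\prod_{i=0}^{q-1}V_i=\prod_k\bigl(z^{(q)}-\xi_k^q\bigr)$; the discriminant appears only after you add the consecutive-order relation $X\partial_X\mathcal P_{q-1}+(X-z-q+1)\mathcal P_{q-1}=X^q-z^{(q)}$, which gives $\xi_k^q-z^{(q)}=\xi_k\,\partial_X\mathcal P_{q-1}(z,\xi_k)$ at every root, and indeed the analogous discriminant formula is false for $(r,q)=(2,3)$.
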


\begin{proof}
When $r=1$, the orbit representatives are $c_i=i$ for
$0\le i\le q-1$.  The defining matrix in \eqref{eq:determinant} is therefore
exactly $M(z,z+1,\ldots,z+q-1)$, which proves \eqref{eq:theta-D}.
Formula \eqref{eq:grivaux-discriminant} is Proposition~3.3(i)
of~\cite{Grivaux}, with $n=q-1$ and $s=z$.  Finally, Corollary~5.5 of that
paper gives
\[
 (-1)^{(q-1)(q-2)/2}\theta_{q-1}(z)>0
 \quad(z\ge0),
\]
which is \eqref{eq:r1-positive}.
\end{proof}

\begin{remark}[Complementarity of the two approaches]
Grivaux obtains \eqref{eq:r1-positive} by relating the determinant to a
discriminant and proving simplicity and angular localization of the roots of
$\mathcal P_n(s,\cdot)$.  The resultant viewpoint is common to both works.
The present Fourier factorization addresses the additional modular ordering
$i\mapsto ir\pmod q$ and shows that, for even $q$, its unpaired alternating
character can produce sign-changing zeros.  In particular, the zero examples
below necessarily have $r\ne1$ and do not contradict
\eqref{eq:r1-positive}.  Conversely, our parity factorization gives only
nonnegativity for general odd $q$ and does not recover the strict
nonvanishing, root localization, or Hurwitz-stability questions studied
in~\cite{Grivaux}.
\end{remark}

\section{Resultant and Fourier factorization}

\begin{theorem}[Resultant identity]\label{thm:resultant}
In \(\mathbb Z[z]\),
\begin{equation}\label{eq:resultant}
 \Res_x\!\left(F(z)x^q-1,\widetilde B_z(x)\right)
 =
 (-1)^{\ell_q}\D_{r,q}(z)
 \prod_{i=0}^{q-1}V_i(z).
\end{equation}
\end{theorem}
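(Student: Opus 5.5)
The plan is to clear the row denominators so that the orbit matrix becomes a structured (twisted-circulant) matrix, and then to evaluate both sides of \eqref{eq:resultant} as the same product over the $q$-th roots of $F(z)$.

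\textbf{Step 1: row scaling.} First I rewrite the entries of \eqref{eq:determinant} through the $V_k$. Extend $V_k$ to all $k\ge0$ by $V_k=\prod_{m=0}^{k-1}(z+c_m)$. Periodicity of $c$ gives $V_{k+q}=F\,V_k$, since $c_0,\dots,c_{q-1}$ is a permutation of $0,\dots,q-1$. The $(i,j)$ entry is then
\[
 \prod_{m=0}^{j-1}(z+c_{i+m})=\frac{V_{i+j}}{V_i}.
\]
Multiplying row $i$ by $V_i$ gives, as an identity of polynomials in $\mathbb Z[z]$,
\[
 \D_{r,q}(z)\prod_{i=0}^{q-1}V_i(z)=\det\big[V_{i+j}\big]_{0\le i,j\le q-1}.
\]
Here $V_{i+j}=V_{i+j}$ when $i+j<q$, and $V_{i+j}=F\,V_{i+j-q}$ when $i+j\ge q$. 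So the right side is a Hankel matrix whose entries wrap around with a factor $F$.

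\textbf{Step 2: reduce to an $F$-circulant.} Reversing the order of the rows ($i\mapsto q-1-i$) turns this Hankel matrix into a Toeplitz matrix of $F$-circulant type, with first row built from $V_{q-1},V_q=F V_0,\dots$ and the same wrap factor. This costs the sign of the reversal permutation, namely $(-1)^{q(q-1)/2}$. I then use the standard eigenvector computation for $f$-circulants. For each $\lambda$ with $\lambda^q=F$, the vector $(\lambda^j)_j$ is an eigenvector. Working generically in $z$, the $q$ roots $\lambda_k=\zeta^k F^{1/q}$ are distinct, and I conclude by polynomial identity. This gives
\[
 \det[V_{i+j}]=\pm\prod_{\lambda^q=F}\ \sum_{k=0}^{q-1}V_k\,\lambda^{q-1-k}
 =\pm\prod_{\lambda^q=F}\lambda^{q-1}\widetilde B_z(\lambda^{-1}).
\]
In practice I would compute the Hankel determinant directly as $\det[W_{i+j}]$: I test the eigen-relation against the vectors $(\lambda^{-i})$, which is cleaner, and keep track of the sign.

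\textbf{Step 3: evaluate the resultant.} For the left side, $F x^q-1$ has leading coefficient $F$ and roots $x=\lambda^{-1}$ with $\lambda^q=F$. Since $\deg\widetilde B_z=q-1$,
\[
 \Res_x(Fx^q-1,\widetilde B_z)=F^{q-1}\prod_{\lambda^q=F}\widetilde B_z(\lambda^{-1}).
\]
Using $\prod_{\lambda^q=F}\lambda=(-1)^{q-1}F$, the quantity $F^{q-1}$ equals $(-1)^{(q-1)^2}\prod\lambda^{q-1}$. This matches Step 2 up to an explicit sign.

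\textbf{Main obstacle: the sign.} The only delicate part is the sign bookkeeping. The row reversal contributes $(-1)^{q(q-1)/2}$, the factor $\prod\lambda^{q-1}$ contributes $(-1)^{(q-1)^2}$, and the Vandermonde-type diagonalization may contribute a further sign. I must check that these combine to $(-1)^{\ell_q}$ with $\ell_q=(q-1)(q-2)/2$. Note that $q(q-1)/2-\ell_q=q-1$, which is suggestive. As a check, I would verify the cases $q=2$ and $q=3$ by hand, where both sides are small explicit polynomials. Integrality in $\mathbb Z[z]$ is then automatic, because both sides are polynomial expressions in the integer-coefficient $V_i$ and $F$.
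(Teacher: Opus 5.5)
Your proof is correct and is essentially the paper's argument. Your Hankel matrix $[V_{i+j}]$ is the paper's $C=[a_{i+j}]$ before the rescaling $a_i=V_i/s^i$ with $s=F^{1/q}$. The paper uses that rescaling to remove the wrap factor $F$, so that an honest circulant appears. Your row reversal $i\mapsto q-1-i$ plays the role of the paper's reflection $J$, induced by $i\mapsto -i \pmod q$.

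The sign you left open does close, with no further contribution. The eigen-decomposition of the twisted circulant is a similarity, so it introduces no sign. Hence $\det[V_{i+j}]=(-1)^{q(q-1)/2}(-1)^{(q-1)^2}\Res_x(Fx^q-1,\widetilde B_z)$. Since $q(q-1)/2+(q-1)=\ell_q+2(q-1)$, the total sign is $(-1)^{\ell_q}$.

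The paper avoids this two-piece bookkeeping because $\det J=(-1)^{\ell_q}$ directly. Your reversal equals $J$ followed by a $q$-cycle of sign $(-1)^{q-1}$. That sign is exactly what your identity $\prod_{\lambda^q=F}\lambda=(-1)^{q-1}F$ cancels.

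One small slip: after reversing rows, the eigenvectors are $(\lambda^{-j})_j$ with $\lambda^q=F$. The vectors $(\lambda^j)_j$ work only if you reverse columns instead. The eigenvalues $\sum_k V_k\lambda^{q-1-k}$, and hence the determinant, are the same in either case.
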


\begin{proof}
It is enough to prove the identity for \(z>0\), since both sides are
polynomials in \(z\).  Let \(s=F(z)^{1/q}>0\), and put
\[
 a_i=\frac{V_i(z)}{s^i}\qquad(0\le i\le q-1).
\]
The extension \(V_{i+q}=F V_i\) makes \(a_i\) \(q\)-periodic.  The
\((i,j)\)-entry of the matrix in \eqref{eq:determinant} is
\[
 \frac{V_{i+j}}{V_i}=s^j\frac{a_{i+j}}{a_i}.
\]
Consequently, if \(C=[a_{i+j}]_{0\le i,j<q}\), then
\begin{equation}\label{eq:detC}
 \D_{r,q}(z)
 =
 \frac{F(z)^{q-1}}{\prod_{i=0}^{q-1}V_i(z)}\det C.
\end{equation}

Let \(R=[a_{j-i}]_{0\le i,j<q}\), with indices modulo \(q\), and let \(J\)
be the permutation matrix induced by \(i\mapsto-i\pmod q\).  Then
\(C=JR\), and
\[
 \det J=(-1)^{\ell_q}.
\]
If \(\omega=e^{2\pi i/q}\) and
\[
 B_z(\xi)=\sum_{i=0}^{q-1}a_i\xi^i
          =\widetilde B_z(\xi/s),
\]
Fourier diagonalization of \(R\) gives
\[
 \det R=\prod_{k=0}^{q-1}B_z(\omega^k)
       =\prod_{k=0}^{q-1}\widetilde B_z(\omega^k/s).
\]
The numbers \(\omega^k/s\) are the roots of \(F(z)x^q-1\).  By the
defining product formula for the resultant,
\[
 \Res_x(Fx^q-1,\widetilde B_z)
 =
 F^{q-1}\prod_{k=0}^{q-1}\widetilde B_z(\omega^k/s).
\]
Combining this with \eqref{eq:detC} proves \eqref{eq:resultant}.
\end{proof}

The identity yields a real factorization with an immediate parity
consequence.  Put
\[
 P_{r,q}(z)=(-1)^{\ell_q}\D_{r,q}(z),
 \qquad
 \rho(z)=\frac{F(z)^{q-1}}{\prod_{i=0}^{q-1}V_i(z)}>0
 \quad(z>0).
\]

\begin{corollary}[Parity factorization]\label{cor:parity}
For \(z>0\), the following statements hold.

\begin{enumerate}
\item If \(q\) is odd, then
\begin{equation}\label{eq:odd}
 P_{r,q}(z)
 =
 \rho(z)B_z(1)
 \prod_{k=1}^{(q-1)/2}|B_z(\omega^k)|^2
 \ge0.
\end{equation}
Hence every positive real zero of \(\D_{r,q}\) has even multiplicity, and
the determinant cannot change sign there.

\item If \(q\) is even, then
\begin{equation}\label{eq:even}
 P_{r,q}(z)
 =
 \rho(z)B_z(1)B_z(-1)
 \prod_{k=1}^{q/2-1}|B_z(\omega^k)|^2,
\end{equation}
where
\begin{equation}\label{eq:alternating}
 B_z(-1)=
 \sum_{i=0}^{q-1}(-1)^i\frac{V_i(z)}{F(z)^{i/q}}.
\end{equation}
Every sign-changing zero must therefore come from the real alternating
factor \(B_z(-1)\).  Zeros contributed only by nonreal Fourier modes have
even multiplicity.
\end{enumerate}
\end{corollary}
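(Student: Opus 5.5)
The plan is to read the factorization directly off the proof of Theorem~\ref{thm:resultant}. For $z>0$ that proof gives
\[
 \D_{r,q}(z)=\rho(z)\det C,\qquad C=JR,\qquad \det J=(-1)^{\ell_q},\qquad \det R=\prod_{k=0}^{q-1}B_z(\omega^k).
\]
Multiplying by $(-1)^{\ell_q}$ and using $(-1)^{2\ell_q}=1$ yields
\[
 P_{r,q}(z)=\rho(z)\prod_{k=0}^{q-1}B_z(\omega^k).
\]
Positivity of $\rho$ on $z>0$ is immediate, since $F$ and every $V_i$ are products of factors $z+c_m>0$.

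The key step is to pair Fourier modes. For $z>0$ the coefficients $a_i=V_i(z)/F(z)^{i/q}$ are real, indeed positive. Hence
\[
 B_z(\omega^{q-k})=B_z(\overline{\omega^k})=\overline{B_z(\omega^k)},
\]
so the indices $k$ and $q-k$ combine to $|B_z(\omega^k)|^2$. Two indices are self-conjugate: $k=0$, always, and $k=q/2$ when $q$ is even. If $q$ is odd, the remaining indices split into the pairs $\{k,q-k\}$ for $1\le k\le (q-1)/2$, which gives \eqref{eq:odd}. If $q$ is even, the pairs are $1\le k\le q/2-1$ and the extra factor is $B_z(\omega^{q/2})=B_z(-1)$. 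Writing $a_i$ out gives \eqref{eq:alternating}.

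For nonnegativity when $q$ is odd, I use $B_z(1)=\sum a_i>0$, since each $a_i>0$ and $a_0=1$. So $P_{r,q}(z)\ge0$ for all $z>0$. Since $P_{r,q}$ is a polynomial that is nonnegative on $(0,\infty)$, it cannot change sign at a positive zero. This forces every such zero to have even multiplicity: at a zero of odd order, $P$ changes sign.

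The same reasoning handles the even case. Here $\rho(z)B_z(1)\prod_k|B_z(\omega^k)|^2\ge0$, so any sign change of $P_{r,q}$ on $(0,\infty)$ must come from the real-valued continuous factor $B_z(-1)$.

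The only delicate point is the final claim about multiplicity, because the factors $|B_z(\omega^k)|^2$ are real-analytic in $z$ but not polynomial (they involve $F^{1/q}$). I would argue as follows. Let $z_0>0$ be a zero of $P_{r,q}$ with $B_{z_0}(-1)\ne0$. Near $z_0$, $\rho B_z(1)B_z(-1)$ is analytic and has constant sign. Each $B_z(\omega^k)$ is analytic in $z$ on a neighbourhood of $z_0$, so $|B_z(\omega^k)|^2$ equals $B_z(\omega^k)\,\overline{B_{\bar z}(\omega^k)}$, whose order of vanishing at $z_0$ is twice that of $B_z(\omega^k)$. The multiplicity of $z_0$ as a zero of $P_{r,q}$ is therefore a sum of even numbers.
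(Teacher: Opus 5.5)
Your proposal is correct and follows the paper's argument: you read off $P_{r,q}=\rho\prod_{k=0}^{q-1}B_z(\omega^k)$ from the proof of Theorem~\ref{thm:resultant}, then use the positivity of the coefficients $a_i$ to pair the modes $k$ and $q-k$, which leaves $B_z(1)$ and, for even $q$, $B_z(-1)$ unpaired. Your local holomorphic argument for the multiplicity claims fills in the detail behind the paper's one-line remark that these statements follow from the modulus-square factors. That argument writes $|B_z(\omega^k)|^2$ as $B_z(\omega^k)\overline{B_{\bar z}(\omega^k)}$ near a positive zero, and uses that a polynomial nonnegative on $(0,\infty)$ has only even-order zeros there.
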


\begin{proof}
The coefficients \(a_i\) are real and positive, so \(B_z(1)>0\), while
\[
 B_z(\omega^{q-k})=\overline{B_z(\omega^k)}.
\]
Pairing the conjugate factors in the proof of
Theorem~\ref{thm:resultant} gives \eqref{eq:odd} when \(q\) is odd.
For even \(q\), the additional unpaired root of unity is
\(\omega^{q/2}=-1\), which gives \eqref{eq:even} and
\eqref{eq:alternating}.  The multiplicity statements follow from the
modulus-square factors.
\end{proof}

\begin{remark}[Why the parity is intrinsic]
When \(q\) is even, coprimality forces \(r\) to be odd.  The orbit
\(c_i=ir\pmod q\) therefore alternates between even and odd residues.  The
factor \(B_z(-1)\) is precisely the Fourier character that records this
alternation.  There is no corresponding real nontrivial character for odd
\(q\).
\end{remark}

\section{A concrete sign-changing zero}

The exact integer calculation at \((q,r)=(16,9)\) can now be interpreted
structurally.

\begin{proposition}\label{prop:q16}
For \((q,r)=(16,9)\), the alternating factor in
\eqref{eq:alternating} vanishes at some
\[
 z_*\in(11/16,3/4).
\]
Consequently,
\[
 \D_{9,16}(z_*)=0,
 \qquad
 z_*/16\in(11/256,3/64)
\]
in the original Bishop fundamental interval.
\end{proposition}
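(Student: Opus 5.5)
The plan is to apply the intermediate value theorem to the alternating factor
\[
 A(z)=B_z(-1)=\sum_{i=0}^{15}(-1)^i\frac{V_i(z)}{F(z)^{i/16}},
\]
with the orbit $c_i=9i \pmod{16}$, that is,
\[
 (c_0,\dots,c_{15})=(0,9,2,11,4,13,6,15,8,1,10,3,12,5,14,7).
\]
This fixes $V_i(z)=\prod_{m<i}(z+c_m)$ explicitly. For $z>0$ every $V_i$ and $F$ is positive, so $A$ is continuous on $(0,\infty)$.

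I will show that $A(11/16)$ and $A(3/4)$ have opposite signs. This gives a zero $z_*$ of $A$ in $(11/16,3/4)$. By the factorization \eqref{eq:even} of Corollary~\ref{cor:parity}, we have $P_{9,16}(z_*)=\rho(z_*)B_{z_*}(1)\,A(z_*)\prod_k|B_{z_*}(\omega^k)|^2=0$, and hence $\D_{9,16}(z_*)=0$. The statement about the fundamental interval is just the rescaling $t=z/q$ with $q=16$.

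To make the sign evaluation rigorous rather than numerical, I would avoid the irrational powers $F^{i/16}$ as follows. Put $s=F(z)^{1/16}$. Group the terms as $E(s)=\sum_{i\text{ even}}V_i s^{-i}$ and $O(s)=\sum_{i\text{ odd}}V_i s^{-i}$, so that $A=E-O$.

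The cleanest certified route is to take the two rational endpoints $z_0=11/16$ and $z_1=3/4$ and compute $F(z_0)$ and $F(z_1)$ exactly as rationals. Then bracket $s$ between rational numbers $s^-<s<s^+$, certified by checking $(s^\pm)^{16}$ against $F$. Since each term $V_i s^{-i}$ is monotone decreasing in $s$, we get $E(s^+)\le E(s)\le E(s^-)$ and $O(s^+)\le O(s)\le O(s^-)$. So $A(z)\le E(s^-)-O(s^+)$ and $A(z)\ge E(s^+)-O(s^-)$.

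With a sufficiently tight bracket, these rational bounds will show $A(11/16)>0>A(3/4)$, or the reverse sign pattern. Which pattern actually occurs is to be read off from the exact computation. All of this is finite exact rational arithmetic and can be recorded as a verifiable certificate.

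An alternative, more algebraic route is to use the resultant identity of Theorem~\ref{thm:resultant}. That identity gives $\D_{9,16}$ as an explicit integer polynomial, so one could simply check the signs of $\D_{9,16}(11/16)$ and $\D_{9,16}(3/4)$ exactly. However, a sign change of $\D$ alone already gives a zero of $\D$, and the corollary then attributes that zero to $A$, because the remaining factors $\rho$, $B_z(1)$ and $|B_z(\omega^k)|^2$ are nonnegative. So I would in fact combine the two routes:
\begin{itemize}
 \item evaluate the integer polynomial $P_{9,16}$ exactly at both endpoints and find opposite signs;
 \item since every other factor in \eqref{eq:even} is $\ge 0$, conclude that $A$ has opposite signs at the endpoints as well, which yields the zero of $A$ in the interval.
\end{itemize}
This is fully exact and requires no root extraction.

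The main obstacle is purely computational. The difference $E-O$ is a small difference of large, nearly equal quantities, because $V_{15}$ has degree $15$ and the terms are of comparable size. The brackets on $s$, or equivalently the exact evaluation of the degree-$\binom{16}{2}$-scale integer polynomial $\D_{9,16}$, must be carried out carefully with big-integer arithmetic. I would first try the exact polynomial evaluation, since it avoids every approximation issue.
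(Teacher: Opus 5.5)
Your framework is sound, and your first route is the paper's argument. The paper writes $z=a/b$, sets $W_i=\prod_{m<i}(a+bc_m)$ and $x=W_{16}^{1/16}$ (your $s$ times $b$), so that $x^{15}B_{a/b}(-1)=\sum_i(-1)^iW_ix^{15-i}$ has integer coefficients. It then bounds the even and odd parts monotonically on a rational bracket $L<x<U$, exactly as you do with $s^{\pm}$.

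The one genuine gap is that you never produce the certificate, and for a statement of this kind the certificate is the entire proof. What you have shown is only that \emph{if} the rational bounds come out with opposite signs, then the proposition follows. You explicitly do not know which sign pattern occurs, and nothing in your argument excludes $B_z(-1)$ having the same sign at $11/16$ and $3/4$. The paper closes this with the brackets $101.264<W_{16}^{1/16}<101.265$ for $(a,b)=(11,16)$ and $25.710<W_{16}^{1/16}<25.711$ for $(a,b)=(3,4)$. Each is certified by comparing sixteenth powers with the exact integer $W_{16}$, and together they yield $B_{11/16}(-1)>0>B_{3/4}(-1)$. In both cases $|B_z(-1)|$ is of order $10^{-3}$ while the even and odd partial sums are each about $2.8$. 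So your worry about cancellation is justified, but five or six significant digits in the root already suffice.

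Your second route is a genuinely different and valid certificate. Since $\ell_{16}=105$ is odd, $P_{9,16}=-\D_{9,16}$. If this has opposite nonzero signs at the two endpoints, then every factor $|B_z(\omega^k)|^2$ in \eqref{eq:even} is nonzero there. Hence $B_z(-1)$ has the sign of $P_{9,16}$ at each endpoint, and the intermediate value theorem applied to $B_z(-1)$ gives $z_*$.

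This route avoids all root extraction, but it has two costs. It replaces the paper's short table by an exact $16\times16$ rational determinant evaluation, which is precisely the black-box step the paper chose to avoid. And because it tests the product of all Fourier modes, it would be inconclusive if some nonreal mode happened to vanish at a chosen endpoint; the direct evaluation of $B_z(-1)$ does not face that contingency.

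Finally, you do not need Theorem~\ref{thm:resultant} to know that $\D_{9,16}\in\mathbb Z[z]$; this is immediate from \eqref{eq:determinant}. At rational $z$ the determinant can be evaluated directly by fraction-free elimination, whereas recovering it from the resultant would require dividing by $\prod_iV_i(z)$.
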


\begin{proof}
We give an exact rational certificate for the two required signs.  This also
makes the finite computation reproducible without relying on a black-box
determinant evaluation.

For $z=a/b>0$, with $a,b$ positive integers, put
\[
 W_0=1,\qquad
 W_i=\prod_{m=0}^{i-1}(a+bc_m)\quad(1\le i\le16),
 \qquad x=W_{16}^{1/16}.
\]
Since $F(a/b)=b^{-16}W_{16}$, one has
\[
 \frac{V_i(a/b)}{F(a/b)^{i/16}}=\frac{W_i}{x^i}.
\]
Consequently the alternating factor has the same sign as the polynomial
\begin{equation}\label{eq:q16-Q}
 Q_{a,b}(x):=x^{15}B_{a/b}(-1)
 =\sum_{i=0}^{15}(-1)^iW_i x^{15-i}.
\end{equation}
Here the modular orbit is
\[
 (c_0,\ldots,c_{15})
 =(0,9,2,11,4,13,6,15,8,1,10,3,12,5,14,7).
\]

For completeness, we now certify the sign of \eqref{eq:q16-Q} using only
integer arithmetic.  If $0<L<x<U$, then the positivity of all $W_i$
gives
\begin{align}
 \underline Q_{a,b}(L,U)
 &:=\sum_{\substack{0\le i\le15\\ i\ {\rm even}}}
       W_iL^{15-i}
   -\sum_{\substack{0\le i\le15\\ i\ {\rm odd}}}
       W_iU^{15-i}
 \le Q_{a,b}(x),\label{eq:q16-lower}\\
 Q_{a,b}(x)&\le
 \overline Q_{a,b}(L,U)
 :=\sum_{\substack{0\le i\le15\\ i\ {\rm even}}}
       W_iU^{15-i}
   -\sum_{\substack{0\le i\le15\\ i\ {\rm odd}}}
       W_iL^{15-i}.\label{eq:q16-upper}
\end{align}
The following table records $W_{16}$ and rational brackets for $x$.  Each
bracket is verified by raising its endpoints to the sixteenth power and
comparing with $W_{16}$.  Substitution in
\eqref{eq:q16-lower}--\eqref{eq:q16-upper}, followed by clearing the
denominator $1000^{15}$, gives the polynomial bounds displayed below.
\[
\begin{array}{c|c|c}
 (a,b)&W_{16}&(L,U)\\ \hline
 (11,16)&
 122274082645535806376391906530625&
 (101264/1000,101265/1000)\\[2pt]
 (3,4)&
 36453104912477522894625&
 (25710/1000,25711/1000)
\end{array}
\]
For these two rows, respectively, direct substitution in
\eqref{eq:q16-lower}--\eqref{eq:q16-upper} gives the exact rational bounds
\begin{align*}
 9\cdot10^{26}
 &<\underline Q_{11,16}<Q_{11,16}(x)
 <\overline Q_{11,16}<2\cdot10^{27},\\
 -5\cdot10^{18}
 &<\underline Q_{3,4}<Q_{3,4}(x)
 <\overline Q_{3,4}<-2\cdot10^{18}.
\end{align*}
Every displayed assertion is a comparison of integers after multiplication
by $1000^{16}$ for the root brackets or by $1000^{15}$ for the polynomial
bounds.

It follows that
\[
 B_{11/16}(-1)>0,
 \qquad B_{3/4}(-1)<0.
\]
The function $z\mapsto B_z(-1)$ is continuous for $z>0$, because
$F(z)>0$ there and the positive sixteenth root is continuous.  Hence it
vanishes at some $z_*\in(11/16,3/4)$.  Formula \eqref{eq:even} gives
$\D_{9,16}(z_*)=0$.  Finally, the unscaled Bishop variable is $t=z/16$,
which yields $t\in(11/256,3/64)$.
\end{proof}

Thus the \(q=16\) counterexample is not an unexplained cancellation among
complex modes.  It is caused by the unique real alternating mode that exists
only for even denominator.

\section{An analytic three-block family}

We now give a fully analytic infinite-family argument.  It yields all
sufficiently large members, without asserting an explicit starting index.  Put
\begin{equation}\label{eq:q3}
 \widehat q_n=6n-2,\qquad \widehat r=3,
\end{equation}
and denote the corresponding quantities in
\eqref{eq:V}--\eqref{eq:alternating} by hats.  Thus
\begin{equation*}
 \widehat s_n=(\widehat q_n!)^{1/\widehat q_n},\qquad
 \widehat x_n=\frac3{\widehat s_n},\qquad
 \widehat B_n=\widehat B_1^{(n)}(-1).
\end{equation*}

For $\nu,x>0$, define
\begin{equation}\label{eq:J}
 J_\nu(x)=\frac1{\Gamma(\nu)}
 \int_0^\infty\frac{t^{\nu-1}e^{-t}}{1+xt}\,dt.
\end{equation}
We shall use the following elementary finite-sum identity.  If
$\alpha,x>0$ and $L\ge1$ is an integer, then
\begin{equation}\label{eq:block-integral}
 \sum_{j=0}^{L-1}(-1)^j(\alpha)_j x^j
 =J_\alpha(x)-(-1)^Lx^L(\alpha)_LJ_{\alpha+L}(x).
\end{equation}
Indeed, integrate the identity
\[
 \sum_{j=0}^{L-1}(-xt)^j
 =\frac{1-(-xt)^L}{1+xt}
\]
against $t^{\alpha-1}e^{-t}/\Gamma(\alpha)$ and use
$\Gamma(\alpha+L)/\Gamma(\alpha)=(\alpha)_L$.

\begin{theorem}[Three-block family]\label{thm:threeblock}
For every sufficiently large integer \(n\),
\begin{equation}\label{eq:three-negative}
 \widehat B_n<0.
\end{equation}
Consequently, \(\D_{3,6n-2}\) has a sign-changing zero in \(z\in(0,1)\)
for every sufficiently large \(n\).  More precisely,
\begin{equation}\label{eq:three-limit}
 \lim_{n\to\infty}\widehat B_n
 =\frac{e}{e+1}(1-\widehat A)<0,
 \qquad
 \widehat A=
 \frac{e^{1/3}(2\pi)^{2/3}}
 {3^{1/3}\Gamma(1/3)}>1.
\end{equation}
\end{theorem}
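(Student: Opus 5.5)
The plan is to evaluate the alternating factor \eqref{eq:alternating} at $z=1$ in closed form by splitting the modular orbit into three arithmetic blocks. Each block is summed with \eqref{eq:block-integral}, and the $n\to\infty$ limit is then taken by Stirling's formula.

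\textbf{Block decomposition at $z=1$.} Since $\widehat q_n=6n-2\equiv1\pmod 3$, the orbit $c_i=3i\bmod \widehat q_n$ splits into three blocks:
\begin{itemize}
\item $0,3,\dots,6n-3$, of length $L_1=2n$;
\item $2,5,\dots,6n-4$, of length $L_2=2n-1$, starting at the even index $2n$;
\item $1,4,\dots,6n-5$, of length $L_3=2n-1$, starting at the odd index $4n-1$.
\end{itemize}
At $z=1$ the factors $1+c_m$ run through $3(j+\tfrac13)$, $3(j+1)$ and $3(j+\tfrac23)$ respectively. Hence, with $x=\widehat x_n=3/\widehat s_n$, the $i$-th term of $\widehat B_n$ inside a block is
\[
(-1)^i\,P\,(\alpha)_j\,x^j,
\qquad \alpha\in\{\tfrac13,1,\tfrac23\},
\]
where $P$ is the accumulated prefix of the earlier blocks. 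These prefixes are
\[
P_0=1,\qquad P_1=x^{2n}(\tfrac13)_{2n},\qquad P_2=P_1\,x^{2n-1}(2n-1)!,
\]
and the identity $V_q=F=\widehat s_n^{\,q}$ gives the closing relation
\[
P_2\,x^{2n-1}(\tfrac23)_{2n-1}=1 .
\]
Applying \eqref{eq:block-integral} to each block, and tracking the signs $(-1)^{2n}=1$ and $(-1)^{4n-1}=-1$, I expect
\[
\widehat B_n=J_{1/3}(x)-P_1J_{2n+1/3}(x)+P_1J_1(x)+P_2J_{2n}(x)-P_2J_{2/3}(x)-J_{2n-1/3}(x).
\]
In this expression the tail term of each block is rewritten with the next prefix, and the tail of the last block with the closing relation.

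\textbf{Asymptotics.} Stirling gives $\widehat s_n\sim \widehat q_n/e$, so $x\to0$ and $2n\,x\to e$.
\begin{itemize}
\item For fixed $\nu$, $J_\nu(x)\to1$ by dominated convergence.
\item For $\nu=2n+O(1)$, substitute $t=\nu u$. The Gamma$(\nu)$ law of $u$ concentrates at $u=1$, so $J_\nu(x)\to 1/(1+e)$. This needs a uniform argument: bounded integrand plus Chebyshev concentration of $t/\nu$.
\end{itemize}
This yields
\[
\widehat B_n=\frac{e}{e+1}\,(1+P_1-P_2)+o(1)
\]
once $P_1$ and $P_2$ are known to stay bounded. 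Stirling applied to
\[
P_1=\frac{(3/\widehat s_n)^{2n}\,\Gamma(2n+\tfrac13)}{\Gamma(\tfrac13)}
\]
and to $P_2=1/\bigl(x^{2n-1}(\tfrac23)_{2n-1}\bigr)$ gives explicit limits involving $\Gamma(\tfrac13)$, $\Gamma(\tfrac23)$, $e^{1/3}$ and powers of $2\pi$ and $3$. Using the reflection formula $\Gamma(\tfrac13)\Gamma(\tfrac23)=2\pi/\sqrt3$, I expect $\lim(P_2-P_1)=\widehat A$. Numerically $\widehat A\approx 4.75/3.86\approx1.23>1$, which proves \eqref{eq:three-limit} and hence \eqref{eq:three-negative}.

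\textbf{The zero and the main obstacle.} The function $z\mapsto B_z(-1)$ is real-analytic on $z>0$. As $z\to0^+$ one has $a_i\sim c\,z^{1-i/q}\to0$ for $i\ge1$, so $B_z(-1)\to1>0$, while $B_1(-1)=\widehat B_n<0$. Therefore $B_z(-1)$ has a zero of odd order in $(0,1)$. In \eqref{eq:even} the factors $\rho$ and $B_z(1)$ are positive and the remaining factors vanish to even order. So $\D_{3,6n-2}$ vanishes to odd order at that point and changes sign there.

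The main obstacle is the Stirling bookkeeping for $P_1$ and $P_2$. It requires the second-order term of $\log \widehat s_n=\frac1q\log q!$, namely the $\frac{\log(2\pi q)}{2q}$ correction, because it is raised to the power $2n\approx q/3$. This correction is exactly what produces the factors $(2\pi)^{2/3}$ and $e^{1/3}$ in $\widehat A$. I would carry out this computation carefully with explicit $O(1/n)$ error terms.
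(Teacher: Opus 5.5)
Your proposal is correct and follows the paper's proof essentially step for step. It uses the same three blocks with prefixes $P_1=\widehat A_{n,1}$ and $P_2=\widehat A_{n,2}$, the same six-term reduction \eqref{eq:three-reduction} obtained from \eqref{eq:block-integral}, and the same limits: $J_\nu(\widehat x_n)\to1$ for fixed $\nu$, and $J_{2n+O(1)}(\widehat x_n)\to1/(1+e)$ by Gamma concentration.

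The differences are cosmetic:
\begin{itemize}
\item You reach $\lim P_2=\widehat A$ through the closing relation and $\Gamma(\tfrac13)\Gamma(\tfrac23)=2\pi/\sqrt3$. The paper instead multiplies $\widehat A_{n,1}=O(n^{-1/3})$ by the middle block.
\item You check $\widehat A>1$ numerically, whereas the paper certifies it via $\Gamma(1/3)<21/8+e^{-1}<3$ and $e(2\pi)^2>81$.
\item Your odd-order argument for the sign change is slightly more careful than the paper's.
\item One small correction to your commentary: the factor $e^{1/3}$ comes from the leading Stirling terms, not from the $\log(2\pi q)/(2q)$ correction. That correction instead supplies the cancelling $q^{1/6}$ and part of the power of $2\pi$.
\end{itemize}
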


\begin{proof}
The order of the weights \(\widehat w_i=1+(3i\bmod\widehat q_n)\)
has only three arithmetic blocks:
\begin{equation}\label{eq:three-block-table}
\begin{array}{c|c|c|c}
h&\widehat I_{n,h}\le i<\widehat I_{n,h+1}
 &\widehat w_i&(-1)^{\widehat I_{n,h}}\\ \hline
0&0\le i<2n&1,4,\ldots,6n-2&+1\\
1&2n\le i<4n-1&3,6,\ldots,6n-3&+1\\
2&4n-1\le i<6n-2&2,5,\ldots,6n-4&-1.
\end{array}
\end{equation}
Let \(\widehat A_{n,h}=\widehat a_{n,\widehat I_{n,h}}\), with
\(\widehat A_{n,0}=\widehat A_{n,3}=1\).  Direct multiplication along
the blocks gives
\begin{align}
 \widehat A_{n,1}
 &=\frac{3^{2n}\Gamma(2n+1/3)}
 {\Gamma(1/3)\Gamma(6n-1)^{2n/\widehat q_n}},\label{eq:three-A1}\\
 \widehat A_{n,2}
 &=\widehat A_{n,1}
 \frac{3^{2n-1}\Gamma(2n)}
 {\Gamma(6n-1)^{(2n-1)/\widehat q_n}}.\label{eq:three-A2}
\end{align}
Using the finite-sum identity \eqref{eq:block-integral} on the three blocks
and collecting adjacent boundary terms gives the exact reduction
\begin{align}
 \widehat B_n={}&
 \bigl(J_{1/3}(\widehat x_n)-J_{2n-1/3}(\widehat x_n)\bigr)\nonumber\\
 &+\widehat A_{n,1}
 \bigl(J_1(\widehat x_n)-J_{2n+1/3}(\widehat x_n)\bigr)\nonumber\\
 &-\widehat A_{n,2}
 \bigl(J_{2/3}(\widehat x_n)-J_{2n}(\widehat x_n)\bigr).
 \label{eq:three-reduction}
\end{align}

Stirling's formula yields
\begin{equation}\label{eq:three-asymptotics}
 \widehat x_n\longrightarrow0,\qquad
 2n\widehat x_n\longrightarrow e,\qquad
 \widehat A_{n,1}=O(n^{-1/3}),\qquad
 \widehat A_{n,2}\longrightarrow\widehat A.
\end{equation}
Indeed, retaining the constant terms in \eqref{eq:three-A2} gives
\begin{equation*}
 \log\widehat A
 =\frac13+\frac23\log(2\pi)
  -\frac13\log3-\log\Gamma(1/3).
\end{equation*}
For fixed $\nu$, dominated convergence in \eqref{eq:J} gives
\(J_\nu(\widehat x_n)\to1\).  If $\nu_n=2n+O(1)$ and
$T_{\nu_n}$ has the Gamma distribution with shape $\nu_n$ and unit scale,
then
\[
 \mathbb E\left|\frac{T_{\nu_n}}{\nu_n}-1\right|^2
 =\frac1{\nu_n}\longrightarrow0.
\]
Together with $\nu_n\widehat x_n\to e$, boundedness and convergence in
probability give
\begin{equation*}
 J_{2n+O(1)}(\widehat x_n)\longrightarrow\frac1{1+e}.
\end{equation*}
Substitution in \eqref{eq:three-reduction} proves the equality in
\eqref{eq:three-limit}.

It remains only to check the strict sign, for which no interval computation
is needed.  Since \(e^{-y}\le1-y/2\) on \([0,1]\) and
\(y^{-2/3}\le1\) on \([1,\infty)\),
\begin{equation*}
 \Gamma(1/3)
 <\int_0^1y^{-2/3}(1-y/2)\,dy
  +\int_1^\infty e^{-y}\,dy
 =\frac{21}{8}+e^{-1}<3.
\end{equation*}
Here $e>8/3$ proves the last inequality.  The elementary bounds $e>8/3$ and
$\pi>3$ also give $e(2\pi)^2>96>81$, and hence
\begin{equation*}
 e^{1/3}(2\pi)^{2/3}>3^{4/3}
 >3^{1/3}\Gamma(1/3),
\end{equation*}
so \(\widehat A>1\).  The limit in
\eqref{eq:three-limit} is strictly negative, proving
\eqref{eq:three-negative} for all sufficiently large \(n\).

Finally, fix $n$ and let $z\downarrow0$.  For every $1\le i<\widehat q_n$,
$\widehat V_i(z)$ contains the factor $z$, whereas
\[
 \widehat F(z)=z\prod_{a=1}^{\widehat q_n-1}(z+a)\asymp z.
\]
Hence
$\widehat V_i(z)/\widehat F(z)^{i/\widehat q_n}
=O(z^{1-i/\widehat q_n})\to0$, and therefore
\(\widehat B_z^{(n)}(-1)\to1\).  Continuity and
Corollary~\ref{cor:parity} give a sign-changing zero in \((0,1)\).
\end{proof}

\section{From determinant zeros to explicit cyclicity}
\label{sec:determinant-to-cyclicity}

The preceding factorization settles the structural question that motivates
the quantitative part of the paper.  Odd denominators enjoy a parity-based
nonnegativity property, but even denominators admit genuine sign-changing
zeros, and these occur in an infinite family.  Therefore a proof of explicit
cyclicity criteria cannot rely on a uniform lower bound for the fibre
determinant over the whole fundamental interval.  The substitute is to use
the exact degree and the nonzero value at one point to control, by Remez's
inequality, the measure of the region where the determinant is small.  After
discarding that region, the inverse matrix is quantitatively well behaved.

We now implement this strategy first for the constant vector and then for all
polynomial vectors with nonzero constant term.

\section{Explicit gap functions: statement and constants}\label{sec:gap-statement}

Fix $1<p<\infty$.  For every integer $q\geq2$, put
\begin{align*}
 s_q&=\frac{q(q-1)}2,
 &\ell_q&=\frac{(q-1)(q-2)}2,
 &\varepsilon_q&=\frac1{q+1},
 &b_q&=q^{-s_q},
 &W_q&=\frac{q!}{q^q},\\
 \eta_q&=
 \begin{cases}
  b_q/2,&q=2,\\[2pt]
  \displaystyle\frac{b_q}{2}
  \left(\frac{\varepsilon_q^p}{8}\right)^{\ell_q},&q\ge3,
 \end{cases}
 &a_q&=\frac{\varepsilon_q^p}{8q},
 &H_q&=\frac{q!}{\eta_q},
 &\Gamma_q&=3+\frac4{W_q}.
\end{align*}
Define
\begin{equation}\label{eq:Lq}
 L_q=
 \frac{H_q}{a_q}
 +\frac{q!\,q(q-1+\pi)}{\eta_q}
 +\frac{2(q!)^2q^2}{\eta_q^2},
\end{equation}
and then
\begin{align}
 \Lambda_q&=\max\left\{1,\frac{\pi qL_q}{\varepsilon_q}\right\},
 &\qquad N_q&=\left\lceil4\Lambda_q
          \log(4\Lambda_q+\mathrm e)\right\rceil,\label{eq:N}\\
 d_q&=qN_q+q-1,\label{eq:N-d}\\
 C_q&=3qH_qN_q\Gamma_q^{N_q},\label{eq:Cq}\\
 \delta_q&=\min\left\{
   \frac{1}{2d_q},
   \frac{\varepsilon_q^p}{4C_q^p d_q^{p+1}}
 \right\}.\label{eq:deltaq}
\end{align}
Finally set
\begin{equation}\label{eq:psi}
 \boxed{
 \psi_{1,p}(1)=1,\qquad
 \psi_{1,p}(q)=\frac1{q\delta_q}\quad(q\ge2).
 }
\end{equation}
Equivalently, for $q\ge2$,
\[
 \psi_{1,p}(q)=
 \max\left\{
  \frac{2d_q}{q},
  \frac{4C_q^p d_q^{p+1}}{q\varepsilon_q^p}
 \right\}.
\]

\begin{theorem}\label{thm:main}
Let $\alpha\in(0,1)\setminus\Q$, and let $(p_n/q_n)$ be its continued-fraction
convergents.  If for every $n$ there is $n_0\ge n$ such that
\[
 q_{n_0+1}>\psi_{1,p}(q_{n_0}),
\]
then the constant function $1$ is cyclic for $T_\alpha$ on $L^p[0,1]$.
\end{theorem}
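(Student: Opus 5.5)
The proof reduces cyclicity to a single approximation property. The closed span $E=\overline{\{Q(T_\alpha)1\}}$ is invariant under multiplication by every polynomial in $T_\alpha$. So it suffices to show that, for a fixed dense family of targets, each target can be approximated in $L^p$ to within error $\varepsilon$ by some $Q(T_\alpha)1$. The natural target is $g=1_{[0,1/q)}$-type step functions, or more simply the monomials $t^k$. I would aim to show that for every $q=q_{n_0}$ satisfying the gap hypothesis, there is a polynomial $Q$ of degree $\le d_q$ with $\norm{Q(T_\alpha)1-g}_p\le 2\varepsilon_q$ for each $g$ in a $q$-dependent but exhausting family. Here $\varepsilon_q=1/(q+1)\to0$ along the infinitely many $n_0$. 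Letting $n_0\to\infty$ then gives density.

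\textbf{Rational step.} Work at $\alpha'=p_{n_0}/q_{n_0}=r/q$. By Section~\ref{sec:determinant-to-cyclicity} the fibre matrix $M^1_{r,q}(t)$ has determinant equal to a fixed nonzero multiple of $\D_{r,q}(qt)$, a polynomial of degree $s_q$ nonvanishing at $0$. After normalization its value at $0$ is comparable to $b_q=q^{-s_q}$. Remez's inequality on $I_q$ then bounds the measure of $\{|D|<\eta_q\}$ by roughly $(\varepsilon_q^p/8)$ times $|I_q|$. This is where the exponent $\ell_q$ and the factor $1/2$ in $\eta_q$ come from.

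On the complement $G$, Cramer's rule gives coefficients $h_j=(\text{cofactor})/D$. Each cofactor is bounded by $q!$ since the entries are $\le1$, so $|h_j|\le H_q$ on $G$. Setting $h_j=0$ off $G$ loses at most $\varepsilon_q^p/8$ in $p$-th power. I would then mollify the cutoff: a linear ramp of width $a_q$ keeps a Lipschitz constant of order $H_q/a_q$. Together with the derivative of the cofactors and of $D$ (the $q!q(q-1+\pi)/\eta_q$ and $2(q!)^2q^2/\eta_q^2$ terms), this yields a Lipschitz bound $L_q$ for the $1/q$-periodic $h_j$.

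Next, approximate each $h_j$ by trigonometric polynomials in $e^{2\pi i qt}$ of degree $N_q$ using endpoint-corrected Fejér means. The Jackson-type bound $\pi qL_q\log N/N$ falls below $\varepsilon_q$ exactly at the choice of $N_q$ in \eqref{eq:N}. Multiplication by $e^{2\pi iqkt}$ is not available as an operator, however. It must be realized through $T_{r/q}^q=\prod(t+a/q)$-type multipliers, i.e. polynomials in $T^q$. Here $\Gamma_q=3+4/W_q$ enters as the growth of the coefficient conversion, with $W_q=q!/q^q$ the minimum of the orbit product. This produces a polynomial $Q$ of degree $d_q=qN_q+q-1$ whose coefficients have $\ell^1$ norm $\le C_q$, and which satisfies $\norm{Q(T_{r/q})1-g}_p\le\varepsilon_q$.

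\textbf{Transfer to irrational $\alpha$.} Since $|\alpha-r/q|<1/(qq_{n_0+1})<\delta_q$, the functions $T_\alpha^j1$ and $T_{r/q}^j1$ are products of $j$ factors each bounded by $1$. They agree except on the set where some shifted point crosses the discontinuity at $1$, of measure $\le j|\alpha-r/q|$, and elsewhere differ by at most $j|\alpha-r/q|$. Hence $\norm{T_\alpha^j1-T_{r/q}^j1}_p^p\lesssim 2j\delta_q$. Summing against coefficients of total size $C_q$ up to degree $d_q$, the choice \eqref{eq:deltaq} keeps the total error below $\varepsilon_q$. This gives the $2\varepsilon_q$ approximation and finishes the argument.

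\textbf{Main obstacle.} I expect the main obstacle to be the rational step's conversion from $1/q$-periodic coefficient functions to actual polynomials in $T_{r/q}$ with an explicit coefficient bound $C_q$. The Remez and Fejér estimates are routine by comparison. I would first try to express $e^{2\pi iqt}$-approximations via polynomials in the periodic multiplier $T^q_{r/q}$, inverting the range map with the bound $W_q$.
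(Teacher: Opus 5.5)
Your outline has the same architecture as the paper: the fibre system at $r/q=p_{n_0}/q_{n_0}$, Remez on the small-determinant set, a cutoff inverse, Fej\'er approximation, realisation as a polynomial in $T_{r/q}$, and parameter continuity. \textbf{The genuine gap is the step you defer as the ``main obstacle''; it is the heart of the rational approximation, and your suggested route does not work as stated.} Approximating $h_j$ by trigonometric polynomials in $e^{2\pi iqt}$ and then converting each $e^{2\pi iqkt}$ into a polynomial in $T_{r/q}^q$ creates a second approximation problem. You would have to approximate $e^{2\pi ik\,w_q^{-1}(s)}$ by polynomials in $s$ on $[0,W_q]$ for every $k<N_q$, with its own degree and coefficient cost. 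That does not yield the degree $d_q$ and the $\ell^1$ bound $C_q$ to which $\delta_q$, and hence $\psi_{1,p}$, is calibrated.

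Your phrase ``inverting the range map'' is the right germ, but it must be applied to $h_j$ itself. $T_{r/q}^q$ is multiplication by the cellwise function $w_q(t)=\prod_{k=0}^{q-1}(t+k/q)$. This maps $I_q$ increasingly onto $[0,W_q]$ with $w_q'\ge w_q'(0)=W_q$. So $W_q$ is the maximal value and the minimal slope of $w_q$, not its minimum, which is $0$. Hence:
\begin{itemize}
\item $h_j=F_j\circ w_q$ with $\operatorname{Lip}(F_j)\le L_q/W_q$;
\item after the substitution $s=\frac{W_q}{2}(1+\cos\theta)$, the even function $\Phi_j$ has Lipschitz constant $L_q/2$, so its Fej\'er mean of order $N_q$ is already an algebraic polynomial $P_j(s)$ of degree $<N_q$;
\item after endpoint correction, $Q(\xi)=\sum_j\xi^jB_j(\xi^q)$ gives $Q(T_{r/q})1=\sum_jB_j(w_q)\,T_{r/q}^j1$ directly;
\item $\Gamma_q=3+4/W_q$ bounds the growth of the $\ell^1$ norms of the rescaled Chebyshev polynomials $T_k(2s/W_q-1)$.
\end{itemize}

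Three further claims need correction.
\begin{itemize}
\item \textbf{Degree.} $D_{r,q}$ does not have degree $s_q$. With $A(t)=(tI+\operatorname{diag}c)P_r$, the columns are $\mathbf 1,A\mathbf 1,\dots,A^{q-1}\mathbf 1$. Since $A\mathbf 1=t\mathbf 1+c$, subtracting $t$ times column $j-1$ from column $j$ (right to left) gives $\det[\mathbf 1,c,Ac,\dots,A^{q-2}c]$, of degree at most $\ell_q$. Together with $|D_{r,q}(0)|=\prod_{k=1}^{q-1}\{kr/q\}^k\ge b_q$, this is exactly what makes Remez return $|E_{r,q}|\le\varepsilon_q^p/(2q)$ for the given $\eta_q$. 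With exponent $s_q$ you lose a factor $(8/\varepsilon_q^p)^{2/q}$. That factor tends to $1$, so this is repairable, but it is the degree drop that explains the $\ell_q$ in $\eta_q$.
\item \textbf{Cutoff.} In the paper the determinant cutoff acts on the value $|D_{r,q}|$, via $\chi_q$ ramping from $\eta_q$ to $2\eta_q$; this produces the $\eta_q^{-2}$ term in $L_q$. The width $a_q$ belongs to a separate endpoint cutoff $\rho_q$. A ramp of width $a_q$ in $t$ around each component of the bad set, as you describe, adds measure proportional to the number of components (up to about $\ell_q$ per cell). The budget $\varepsilon_q^p$ does not cover that.
\item \textbf{Continuity estimate.} Your bound $\norm{T_\alpha^j1-T_{r/q}^j1}_p^p\lesssim 2j\delta_q$ is not justified. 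A crossing can occur in any of the $j$ factors, so the exceptional set has measure up to $\sum_{l<j}lh$, and off it the products can differ by up to the same amount. The correct argument bounds each factor, $\norm{\{x+l\alpha\}-\{x+l\beta\}}_p^p\le2lh$, and telescopes with Minkowski to $2^{1/p}h^{1/p}k^{1+1/p}$ for $kh\le1$. That is the estimate $\delta_q$ is built for, and $\delta_q\le1/(2d_q)$ guarantees $d_qh\le1$.
\end{itemize}
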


The constants are deliberately crude.  Formula \eqref{eq:psi}, rather than its
size, is the point of the result.  The proof only uses
$0<\varepsilon_q\le1$, $\varepsilon_q\to0$; the choice
$\varepsilon_q=(q+1)^{-1}$ is a simple explicit polynomial scale.

\section{Quantitative rational-parameter approximation}\label{sec:rational-approx}

Enumerate the integers by
\[
 m_1=0,\qquad m_{2k}=k,\qquad m_{2k+1}=-k\quad(k\ge1),
\]
and put $g_n(x)=e^{2\pi i m_nx}$.  The closed linear span of the $g_n$ is
$L^p[0,1]$.

Fix $q\ge2$, $1\le r<q$ with $(r,q)=1$, and $n\le q$.  On the fundamental
interval $I_q=[0,1/q]$, define the $q\times q$ fibre matrix
\begin{equation}\label{eq:M}
 M_{r,q}(t)_{\nu j}
   =\big(T_{r/q}^{j}1\big)(t+\nu/q),
 \qquad 0\le\nu,j\le q-1,
\end{equation}
where endpoint values are interpreted as one-sided limits.  On the interior of
$I_q$, every entry of $M_{r,q}$ is a product of at most $q-1$ affine factors
$t+c/q$, with $0\le c\le q-1$.  Hence
\begin{equation}\label{eq:M-bounds}
 |M_{r,q}(t)_{\nu j}|\le1,
 \qquad
 \left|\frac{d}{dt}M_{r,q}(t)_{\nu j}\right|\le q.
\end{equation}
Let $D_{r,q}(t)=\det M_{r,q}(t)$.  This is a real polynomial on $I_q$.
It is the unscaled version of the orbit determinant studied above.  More
precisely, if $\epsilon_{r,q}\in\{\pm1\}$ is the sign of the row
permutation $i\mapsto ir\pmod q$, then
\begin{equation}\label{eq:scaled-relation}
 \mathcal D_{r,q}(qt)=\epsilon_{r,q}q^{s_q}D_{r,q}(t),
 \qquad s_q=\frac{q(q-1)}2.
\end{equation}
Thus the parity and zero results concern exactly the same determinant used in
the approximation argument below.
The determinant calculation used in the proof of B\'ehani's rational cyclicity
theorem gives
\begin{equation}\label{eq:D0}
 |D_{r,q}(0)|
 =\prod_{k=1}^{q-1}\left\{\frac{kr}{q}\right\}^{k}
 \ge q^{-s_q}=b_q.
\end{equation}

\begin{lemma}\label{lem:exact-degree}
The polynomial $D_{r,q}$ has exact degree
\[
 \ell_q=\frac{(q-1)(q-2)}2,
\]
and its leading coefficient has modulus $1/q$.
\end{lemma}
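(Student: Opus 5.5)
The plan is to work with the scaled determinant \eqref{eq:determinant} and transfer the result back through \eqref{eq:scaled-relation}. Since $D_{r,q}(t)=\epsilon_{r,q}q^{-s_q}\D_{r,q}(qt)$ and $s_q-\ell_q=q-1$, it suffices to show the following: $\D_{r,q}$ has exact degree $\ell_q$, and its leading coefficient has modulus $q^{q-2}$. Indeed, a leading term $cz^{\ell_q}$ of $\D_{r,q}$ becomes $c\,q^{\ell_q-s_q}t^{\ell_q}=c\,q^{-(q-1)}t^{\ell_q}$ in $D_{r,q}$, which has modulus $1/q$ when $|c|=q^{q-2}$. The case $q=2$ is a direct check: $\D_{r,2}=\det\begin{bmatrix}1&z\\1&z+1\end{bmatrix}=1$.

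\textbf{Step 1: the degree bound.} Expand column $j$ of the matrix in powers of $z$:
\[
\prod_{m=0}^{j-1}(z+c_{i+m})=\sum_{k=0}^{j} e_k(c_i,\dots,c_{i+j-1})\,z^{j-k}.
\]
By multilinearity in the columns, $\D_{r,q}$ is a sum over deficit choices $(k_0,\dots,k_{q-1})$, with $k_0=0$ forced. Each choice contributes $z^{s_q-\sum k_j}$ times the determinant of the coefficient vectors. If some column $j\ge1$ has $k_j=0$, its coefficient vector is the all-ones vector, which coincides with column $0$, so that term vanishes. Hence every surviving term has $k_j\ge1$ for all $j\ge1$. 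Total deficit at least $q-1$ gives $\deg\D_{r,q}\le\ell_q$. The coefficient of $z^{\ell_q}$ comes only from the choice $k_j=1$ for all $j\ge1$, so it equals
\[
\det\bigl[\mathbf 1,\,S_1,\,\dots,\,S_{q-1}\bigr],\qquad S_j(i)=c_i+\dots+c_{i+j-1}.
\]

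\textbf{Step 2: reduction to a circulant.} Subtract column $S_{j-1}$ from $S_j$, going from right to left. This turns the determinant into $\det[\mathbf 1,c_{i},c_{i+1},\dots,c_{i+q-2}]$. Now consider the full matrix $C=[c_{i+j}]_{0\le i,j<q}$. Its row sums are all equal to $0+1+\dots+(q-1)=s_q$, because each row is a permutation of the residues. So $\mathbf 1=s_q^{-1}\sum_j(\text{column }j\text{ of }C)$. Column operations then give $\det[\mathbf 1,c_i,\dots,c_{i+q-2}]=\pm s_q^{-1}\det C$. As in the proof of Theorem~\ref{thm:resultant}, $C=JR$ with $R$ circulant, and Fourier diagonalization gives
\[
|\det C|=\prod_{k=0}^{q-1}\Bigl|\sum_{i=0}^{q-1}c_i\omega^{ik}\Bigr|.
\]
The factor $k=0$ equals $s_q$ and cancels.

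\textbf{Step 3: evaluating the Fourier product.} For $k\ne0$, reindex by $a=ir\bmod q$, so that $i\equiv a\bar r$ with $\bar r$ the inverse of $r$ modulo $q$. Then $\sum_i c_i\omega^{ik}=\sum_{a=0}^{q-1}a\,\zeta^a$ with $\zeta=\omega^{\bar r k}\neq1$. The standard derivative-of-geometric-series identity gives $\sum_a a\zeta^a=q/(\zeta-1)$. As $k$ runs over $1,\dots,q-1$, $\zeta$ runs over all nontrivial $q$-th roots of unity, and $\prod_{\zeta\ne1}(1-\zeta)=q$. The product is therefore $q^{q-1}/q=q^{q-2}\neq0$. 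This proves that the degree is exactly $\ell_q$ and that $|c|=q^{q-2}$.

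\textbf{Main obstacle.} I expect the main difficulty to be Step 1's bookkeeping: one must check that the multilinear expansion really isolates a single deficit pattern. The cancellation argument via repeated all-ones columns handles this cleanly. I would also double-check that the sign conventions in \eqref{eq:scaled-relation} do not affect the modulus, which they do not, since only $\pm$ factors enter.
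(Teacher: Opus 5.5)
Your proof is correct, and it reaches the same leading-coefficient determinant as the paper by a partly different route. The paper works with the unscaled $D_{r,q}$ and writes its columns as Krylov vectors $A(t)^j\mathbf 1$ with $A(t)=(tI+\operatorname{diag}c)P_r$. The identity $A(t)\mathbf 1=t\mathbf 1+c$ then gives the column reduction to $[\mathbf 1,c,A(t)c,\dots,A(t)^{q-2}c]$. This yields both the degree bound and the top coefficient $\det[\mathbf 1,c,P_rc,\dots,P_r^{q-2}c]$. The paper evaluates that coefficient by diagonalizing $P_r$, expanding along the constant Fourier mode, and computing a Vandermonde determinant on the $q-1$ nontrivial roots of unity, using $|\operatorname{Vand}|=q^{(q-2)/2}$ and $\prod|1-\omega^m|=q$.

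You instead pass to the integer-scaled $\D_{r,q}$ through \eqref{eq:scaled-relation}. You get the degree bound from a multilinear expansion, where any non-initial column choosing $e_0$ duplicates the all-ones column. You arrive at $\det[\mathbf 1,c_i,\dots,c_{i+q-2}]$, which is the paper's matrix up to row order and the factor $q^{-(q-1)}$ coming from $c_\nu=\nu/q$.

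The evaluation step is where you genuinely differ. The equal-row-sum identity $\mathbf 1=s_q^{-1}\sum_j C_{\cdot j}$ completes the bordered Krylov matrix to the full circulant $C$. You then need only the circulant eigenvalue product, reusing $C=JR$ from Theorem~\ref{thm:resultant}, and the explicit sums $\sum_a a\zeta^a=q/(\zeta-1)$. The $k=0$ eigenvalue $s_q$ cancels the completion factor, so the Vandermonde on nontrivial roots never appears.

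What your route buys is a short, fully explicit computation. It derives the Fourier coefficients that the paper only states "up to a permutation," and it gives the integer statement that the top coefficient of $\D_{r,q}$ is $\pm q^{q-2}$. What the paper's Krylov formulation buys is reusability: it carries over essentially verbatim to Lemma~\ref{lem:polynomial-degree}. There the initial column $v_f$ is no longer constant, and the leading vector $d_m=(1-m+mP_r)c$ is read off mode by mode as a Fourier multiplier. Your multilinear bookkeeping would be noticeably messier in that setting.
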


\begin{proof}
Put $c_\nu=\nu/q$, $c=(c_\nu)_{\nu=0}^{q-1}$, and let $P_r$ be the cyclic
permutation
\[
 (P_ru)_\nu=u_{\nu+r\ ({\rm mod}\ q)}.
\]
On the interior of $I_q$, the columns of $M_{r,q}(t)$ are
\[
 \mathbf 1,A(t)\mathbf 1,\ldots,A(t)^{q-1}\mathbf 1,
 \qquad A(t)=(tI+\operatorname{diag}c)P_r.
\]
Since $A(t)\mathbf 1=t\mathbf 1+c$, subtracting $t$ times column $j-1$
from column $j$, successively for $j=q-1,q-2,\ldots,1$, gives
\begin{equation}\label{eq:column-reduction}
 D_{r,q}(t)=
 \det[\mathbf 1,c,A(t)c,\ldots,A(t)^{q-2}c].
\end{equation}
The right side has degree at most $\ell_q$, and its coefficient at
$t^{\ell_q}$ is
\begin{equation}\label{eq:leading-det}
 L_{r,q}=\det[\mathbf 1,c,P_rc,\ldots,P_r^{q-2}c].
\end{equation}

For completeness, diagonalize $P_r$ by the unitary discrete Fourier transform.
The nonconstant Fourier coefficients of $c$ are, up to a permutation,
\[
 \widehat c_m=-\frac{1}{\sqrt q(1-\omega^m)},
 \qquad 1\le m\le q-1,
 \qquad \omega=e^{2\pi i/q}.
\]
Expanding \eqref{eq:leading-det} along the constant Fourier mode leaves the
Vandermonde determinant on the $q-1$ nontrivial $q$th roots of unity.  Using
\[
 \prod_{m=1}^{q-1}|1-\omega^m|=q,
 \qquad
 \left|\operatorname{Vand}(\omega,\ldots,\omega^{q-1})\right|
 =q^{(q-2)/2},
\]
one obtains
\[
 |L_{r,q}|=\sqrt q\,
 q^{-(q-1)/2}q^{-1}q^{(q-2)/2}=\frac1q.
\]
Thus $L_{r,q}\ne0$, proving both assertions.
\end{proof}

\subsection*{The small-determinant set}

We use the following standard measurable-set form of the Remez inequality
\cite{Remez,BorweinErdelyi}: if $P$ is a real polynomial of degree at most
$s$, $I$ is an interval and $E\subset I$ is measurable with positive measure,
then
\begin{equation}\label{eq:remez}
 \sup_I|P|\le\left(\frac{4|I|}{|E|}\right)^s\sup_E|P|.
\end{equation}
For $q\ge3$, apply this to
\[
 E_{r,q}=\{t\in I_q:|D_{r,q}(t)|<2\eta_q\}.
\]
Equations \eqref{eq:D0}, Lemma \ref{lem:exact-degree}, and
\eqref{eq:remez} imply
\[
 |E_{r,q}|
 \le \frac4q\left(\frac{2\eta_q}{b_q}\right)^{1/\ell_q}
 =\frac{\varepsilon_q^p}{2q}.
\]
For $q=2$, the determinant is constant of modulus $b_q$, so the same conclusion
holds because $E_{r,q}$ is empty.
The absolute value of the determinant is $1/q$-periodic after the fibre rows
are permuted.  Thus the corresponding bad set in $[0,1]$ has measure at most
$\varepsilon_q^p/2$.

Let $\chi_q:[0,\infty)\to[0,1]$ be the piecewise-linear function which is zero
on $[0,\eta_q]$, one on $[2\eta_q,\infty)$, and affine in between.  Let
$\rho_q:I_q\to[0,1]$ be zero on
\[
 [0,a_q]\cup[1/q-a_q,1/q],
\]
one on $[2a_q,1/q-2a_q]$, and affine on the two intervening intervals.
The set where $\rho_q\ne1$, repeated over the $q$ fundamental intervals, has
measure at most $4qa_q=\varepsilon_q^p/2$.

For $t\in I_q$ let
\[
 G_n(t)=\big(g_n(t+\nu/q)\big)_{\nu=0}^{q-1}.
\]
Where $D_{r,q}(t)\ne0$, define
\begin{equation}\label{eq:h}
 h(t)=\rho_q(t)\chi_q(|D_{r,q}(t)|)
       M_{r,q}(t)^{-1}G_n(t),
\end{equation}
and set $h(t)=0$ when $D_{r,q}(t)=0$.  The cutoff makes this a continuous
vector-valued function, and the endpoint cutoff makes every component vanish
at both endpoints of $I_q$.

If the components of $h$ are extended $1/q$-periodically, then on $[0,1]$
\begin{equation}\label{eq:fibre-exact}
 \sum_{j=0}^{q-1}h_j T_{r/q}^{j}1
 =\rho_q\chi_q(|D_{r,q}|)g_n.
\end{equation}
The right side differs from $g_n$ only on a set of measure at most
$\varepsilon_q^p$.  Consequently
\begin{equation}\label{eq:cutoff-error}
 \norm{\rho_q\chi_q(|D_{r,q}|)g_n-g_n}_p\le\varepsilon_q.
\end{equation}

\subsection*{Explicit Lipschitz and Fej\'er bounds}

Every cofactor of $M_{r,q}$ has absolute value at most $(q-1)!$.  Differentiating
the Leibniz formula and using \eqref{eq:M-bounds} gives
\begin{align*}
 |D_{r,q}'|&\le q!q^2,\\
 |(\operatorname{adj}M_{r,q})'_{j\nu}|
   &\le(q-1)!q(q-1).
\end{align*}
Since $n\le q$, one has $|m_n|\le q/2$ and $|g_n'|\le\pi q$.  If
$u=(\operatorname{adj}M_{r,q})G_n$, it follows that
\[
 |u_j|\le q!,
 \qquad
 |u_j'|\le q!q(q-1+\pi).
\]
Using $\operatorname{Lip}(\chi_q)\le1/\eta_q$ and
$\operatorname{Lip}(\rho_q)\le1/a_q$, formula \eqref{eq:h} therefore gives
\begin{equation}\label{eq:h-bounds}
 |h_j|\le H_q,
 \qquad
 \operatorname{Lip}(h_j)\le L_q,
\end{equation}
with $L_q$ as in \eqref{eq:Lq}.

For $t\in I_q$ put
\[
 w_q(t)=\prod_{k=0}^{q-1}(t+k/q).
\]
This maps $I_q$ increasingly onto $[0,W_q]$ and
\[
 w_q'(t)\ge w_q'(0)=\frac{(q-1)!}{q^{q-1}}=W_q.
\]
Thus $F_j=h_j\circ w_q^{-1}$ is continuous on $[0,W_q]$, vanishes at both
endpoints, is bounded by $H_q$, and has Lipschitz constant at most $L_q/W_q$.

Define the even $2\pi$-periodic function
\[
 \Phi_j(\theta)=F_j\!\left(\frac{W_q}{2}(1+\cos\theta)\right).
\]
It is bounded by $H_q$ and has Lipschitz constant at most $L_q/2$.  We use the
standard Fej\'er summability normalization (see, for example,
\cite{Katznelson}).  Let $\sigma_{N_q}\Phi_j$ be its Fej\'er mean of order
$N_q$.  If
\[
 K_N(u)=\frac1N\left(\frac{\sin(Nu/2)}{\sin(u/2)}\right)^2,
\]
then $K_N(u)\le\min\{N,\pi^2/(Nu^2)\}$ for $|u|\le\pi$.  Consequently
\begin{equation}\label{eq:Fejer-error}
 \|\sigma_N\Phi_j-\Phi_j\|_\infty
 \le \frac{L_q}{4\pi}\int_{-\pi}^{\pi}|u|K_N(u)\,du
 \le\frac{\pi L_q(\log N+1/2)}{2N}.
\end{equation}
The Fej\'er mean is even, hence it is an algebraic polynomial $P_j$ of degree
at most $N_q-1$ in $s=W_q(1+\cos\theta)/2$.  Correct its endpoint values by
\begin{equation}\label{eq:endpoint-correction}
 B_j(s)=P_j(s)-P_j(0)\left(1-\frac{s}{W_q}\right)
                 -P_j(W_q)\frac{s}{W_q}.
\end{equation}
Since $F_j(0)=F_j(W_q)=0$, this doubles at most the error in
\eqref{eq:Fejer-error}.  The definition \eqref{eq:N} and the elementary bound
\[
 \log N_q+\frac12\le4\log(4\Lambda_q+\mathrm e)
\]
therefore give
\begin{equation}\label{eq:Fejer-uniform}
 \|B_j-F_j\|_\infty\le\frac{\varepsilon_q}{q}.
\end{equation}
Now define the completely explicit polynomial
\begin{equation}\label{eq:Q}
 Q_{r,q,n}(\xi)=\sum_{j=0}^{q-1}\xi^jB_j(\xi^q).
\end{equation}
On each half-open cell $[\nu/q,(\nu+1)/q)$, the multiplier of
$T_{r/q}^q$ is $w_q(t)$, with $t=x-\nu/q$.  It jumps from $W_q$ to zero
at a cell boundary.  This causes no mismatch because
$B_j(0)=B_j(W_q)=0$ by \eqref{eq:endpoint-correction}.  Thus $B_j(w_q)$
has compatible one-sided values at every boundary.  Since $T_{r/q}^q$ is multiplication by this cellwise
$1/q$-periodic function,
\eqref{eq:fibre-exact}--\eqref{eq:Q} imply
\begin{equation}\label{eq:rational-approx}
 \norm{Q_{r,q,n}(T_{r/q})1-g_n}_p\le2\varepsilon_q.
\end{equation}

The degree of $Q_{r,q,n}$ is at most $d_q$.  To bound its coefficients, write
the Fej\'er mean in the Chebyshev basis:
\[
 P_j(s)=\widehat\Phi_j(0)+2\sum_{k=1}^{N_q-1}
 \left(1-\frac{k}{N_q}\right)\widehat\Phi_j(k)
 T_k(2s/W_q-1).
\]
Here $|\widehat\Phi_j(k)|\le H_q$.  If
$A_k=\|T_k(2s/W_q-1)\|_{\ell^1}$, then $A_0=1$,
$A_1=1+2/W_q$, and the Chebyshev recurrence gives
\[
 A_{k+1}\le2(1+2/W_q)A_k+A_{k-1}.
\]
It follows inductively that
\[
 A_k\le\Gamma_q^k,
 \qquad \Gamma_q=3+4/W_q.
\]
The Fej\'er operator is positive and preserves constants, so
$|P_j(0)|,|P_j(W_q)|\le H_q$.  Formula
\eqref{eq:endpoint-correction} now yields
\[
 \|B_j\|_{\ell^1}\le3H_qN_q\Gamma_q^{N_q}.
\]
Summing over $j$ gives
\begin{equation}\label{eq:coeff}
 \sum_k|[\xi^k]Q_{r,q,n}|
 \le3qH_qN_q\Gamma_q^{N_q}=C_q.
\end{equation}
All estimates are independent of $r$ and of $n\le q$.

\section{An explicit continuity modulus in the parameter}

Write
\[
 P_{k,\alpha}=T_\alpha^k1
 =\prod_{j=0}^{k-1}\{x+j\alpha\}.
\]
If $h=|\alpha-\beta|$ and $kh\le1$, then
\[
 \norm{\{x+j\alpha\}-\{x+j\beta\}}_p^p
 =jh(1-jh)^p+(1-jh)(jh)^p\le2jh.
\]
Telescoping the products and applying Minkowski's inequality yields
\begin{equation}\label{eq:power-continuity}
 \norm{P_{k,\alpha}-P_{k,\beta}}_p
 \le2^{1/p}h^{1/p}k^{1+1/p}.
\end{equation}
Consequently, if $Q$ has degree at most $d_q$ and coefficient $\ell^1$-norm at
most $C_q$, then, whenever $d_qh\le1$,
\begin{equation}\label{eq:Q-continuity}
 \norm{Q(T_\alpha)1-Q(T_\beta)1}_p
 \le2^{1/p}h^{1/p}C_qd_q^{1+1/p}.
\end{equation}
By \eqref{eq:deltaq}, the right side is strictly smaller than $\varepsilon_q$
when $h<\delta_q$.

\section{Proof of Theorem \ref{thm:main}}

Fix $n$.  By hypothesis, arbitrarily far out in the convergent sequence there is
an index $k$ with
\[
 q_{k+1}>\psi_{1,p}(q_k)=\frac1{q_k\delta_{q_k}}.
\]
For all sufficiently large such $k$, one has $n\le q_k$.  The continued-fraction
estimate gives
\[
 \left|\alpha-\frac{p_k}{q_k}\right|
 <\frac1{q_kq_{k+1}}<\delta_{q_k}.
\]
Use the polynomial $Q_{p_k,q_k,n}$ from \eqref{eq:Q}.  Combining
\eqref{eq:rational-approx} and \eqref{eq:Q-continuity},
\[
 \norm{Q_{p_k,q_k,n}(T_\alpha)1-g_n}_p
 <3\varepsilon_{q_k}=\frac3{q_k+1}.
\]
Along these indices the right side tends to zero.  Hence every $g_n$ belongs to
the closed cyclic subspace generated by $1$ under $T_\alpha$.  The trigonometric
system is total in $L^p[0,1]$, so that cyclic subspace is all of $L^p[0,1]$.
\qed

\section{Extension to polynomial vectors}

The same argument can be made effective for every polynomial vector which is
covered by B\'ehani's rational cyclicity theorem.  We give all constants because
the treatment of the jump of the periodic extension of $f$ in the parameter
continuity estimate is essential.

Let
\[
 f(z)=\sum_{\nu=0}^{m}a_\nu z^\nu\in\C[z],
 \qquad a_0\ne0,
\]
where $a_m\ne0$.

\begin{lemma}\label{lem:polynomial-degree}
For every coprime pair $1\le r<q$, the polynomial fibre determinant satisfies
\[
 \deg D^f_{r,q}=qm+\ell_q.
\]
Its leading coefficient has modulus
\begin{equation}\label{eq:polynomial-leading}
 \frac{|a_m|^q}{q}
 \prod_{\substack{\lambda^q=1\\\lambda\ne1}}
       |1-m+m\lambda|.
\end{equation}
For $m=0$ this is $|a_0|^q/q$; for $m\ge1$ it equals
\[
 \frac{|a_m|^q}{q}\bigl(m^q-(m-1)^q\bigr).
\]
\end{lemma}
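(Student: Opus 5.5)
The plan is to redo the proof of Lemma~\ref{lem:exact-degree} with the vector $\mathbf f(t)=(f(t+\nu/q))_{\nu=0}^{q-1}$ in place of $\mathbf 1$, and to read off the top coefficient from the behaviour as $t\to\infty$. With $c=(\nu/q)_\nu$, $C=\operatorname{diag}c$ and $A(t)=(tI+C)P_r$ as in that proof, the identity $(T_{r/q}f)(x)=x f(\{x+r/q\})$ shows that on the interior of $I_q$ the columns of $M^f_{r,q}(t)$ are
\[
 \mathbf f(t),\;A(t)\mathbf f(t),\;\ldots,\;A(t)^{q-1}\mathbf f(t).
\]
So $D^f_{r,q}$ is a polynomial in $t$, and we may study it for large real $t$.

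\textbf{Expansion in $u=1/t$.} Write $A(t)=t(P_r+uCP_r)$. Expanding $f$ gives
\[
 \mathbf f(t)=a_mt^m\bigl(\mathbf 1+u\,m\,c+O(u^2)\bigr),
\]
with entrywise operations. Using $P_r\mathbf 1=\mathbf 1$, the only first-order terms in $(P_r+uCP_r)^j$ applied to $\mathbf 1$ are $P_r^aCP_r^{b+1}\mathbf 1=P_r^ac$ with $a+b=j-1$. Hence column $j$ equals $a_mt^{m+j}v_j$, where
\[
 v_j=\mathbf 1+u\,w_j+O(u^2),\qquad
 w_j=\sum_{a<j}P_r^ac+mP_r^jc .
\]
This gives
\[
 D^f_{r,q}(t)=a_m^q\,t^{qm+s_q}\det[v_0,\ldots,v_{q-1}].
\]

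\textbf{Column differences.} Replace $v_j$ by $(v_j-v_{j-1})/u$ for $j=q-1,\ldots,1$. This shows that $\det[v_0,\ldots,v_{q-1}]=u^{q-1}\bigl(\Delta+O(u)\bigr)$, where
\[
 \Delta=\det[\mathbf 1,\,w_1-w_0,\,\ldots,\,w_{q-1}-w_{q-2}] .
\]
A direct computation gives
\[
 w_j-w_{j-1}=P_r^{j-1}c+m(P_r^j-P_r^{j-1})c=P_r^{j-1}Kc,\qquad K=(1-m)I+mP_r .
\]
Since $K$ commutes with $P_r$, we get
\[
 \Delta=\det[\mathbf 1,Kc,P_rKc,\ldots,P_r^{q-2}Kc].
\]
It follows that $D^f_{r,q}(t)=a_m^q\Delta\,t^{qm+s_q-(q-1)}+O(t^{qm+\ell_q-1})$, and $s_q-(q-1)=\ell_q$. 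So the degree is at most $qm+\ell_q$, and the coefficient of $t^{qm+\ell_q}$ is $a_m^q\Delta$. When $m=0$ we have $K=I$, and this is exactly \eqref{eq:leading-det}.

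\textbf{Evaluating $\Delta$.} $K$ fixes $\mathbf 1$ and preserves the mean-zero subspace $\mathbf 1^\perp$. Also $\mathbf 1,c,P_rc,\ldots,P_r^{q-2}c$ span $\C^q$, since $L_{r,q}\ne0$. Therefore $\Delta=\det(K|_{\mathbf 1^\perp})\,L_{r,q}$, because the matrix $\operatorname{diag}(1,K|_{\mathbf 1^\perp})$ in the splitting $\C\mathbf 1\oplus\mathbf 1^\perp$ maps the first Krylov basis to the second. In the Fourier basis $P_r$ has the eigenvalues $\lambda\ne1$ with $\lambda^q=1$ on $\mathbf 1^\perp$, so $\det(K|_{\mathbf 1^\perp})=\prod_{\lambda\ne1}(1-m+m\lambda)$. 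Together with $|L_{r,q}|=1/q$ from Lemma~\ref{lem:exact-degree}, this gives \eqref{eq:polynomial-leading}.

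\textbf{Closed form and nonvanishing.} For $m\ge1$, factor
\[
 \prod_{\lambda\ne1}(1-m+m\lambda)=m^{q-1}\prod_{\lambda\ne1}\bigl(\lambda-x_0\bigr),\qquad x_0=\tfrac{m-1}{m}.
\]
The last product equals $\pm(x_0^q-1)/(x_0-1)$. Multiplying out gives modulus $m^q-(m-1)^q$. This is nonzero because $0\le x_0<1$, so the leading coefficient is nonzero and the degree is exactly $qm+\ell_q$.

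The step I expect to be the main obstacle is the column-difference reduction. One has to justify that the $O(u^2)$ terms of the $v_j$, including the higher-order terms from $f$ and from $(P_r+uCP_r)^j$, contribute only $O(u^q)$ to the determinant. After the successive differences every column except the first is $u\,(\text{first-order part})+O(u^2)$, so multilinearity gives this. The bookkeeping is cleanest if one notes that each column is a polynomial in $u$ and expands the determinant multilinearly rather than asymptotically.
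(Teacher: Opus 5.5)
Your proof is correct and is essentially the paper's argument. Your column differences $v_j-v_{j-1}$ are exactly the paper's reduction $\mathrm{col}_j-t\,\mathrm{col}_{j-1}=A(t)^{j-1}u_f$ rewritten in $u=1/t$, your $Kc$ is the paper's $d_m$, and $\Delta=\det(K)\,L_{r,q}$ (using $K\mathbf 1=\mathbf 1$) is a basis-free form of the paper's Fourier-multiplier step. One harmless slip: for $m\ge1$ the expansion should read $\mathbf f(t)=a_mt^m\bigl(\mathbf 1+u(mc+\tfrac{a_{m-1}}{a_m}\mathbf 1)+O(u^2)\bigr)$; the extra term adds the same multiple of $\mathbf 1$ to every $w_j$, so it cancels in $w_j-w_{j-1}$ and $\Delta$ is unchanged.
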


\begin{proof}
Use the notation $c$, $P_r$, and $A(t)$ from Lemma
\ref{lem:exact-degree}, and put
\[
 v_f(t)=\bigl(f(t+c_\nu)\bigr)_{\nu=0}^{q-1}.
\]
The columns of the fibre matrix are
\[
 v_f,A(t)v_f,\ldots,A(t)^{q-1}v_f.
\]
Subtracting $t$ times column $j-1$ from column $j$, from right to left,
rewrites its determinant as
\begin{equation}\label{eq:polynomial-column-reduction}
 D^f_{r,q}(t)=
 \det[v_f,u_f,A(t)u_f,\ldots,A(t)^{q-2}u_f],
 \qquad u_f=(A(t)-tI)v_f.
\end{equation}
If $m=0$, then $v_f=a_0\mathbf1$ and
$D^f_{r,q}=a_0^qD_{r,q}$, so the result follows from Lemma
\ref{lem:exact-degree}.  Assume from now on that $m\ge1$.  Since
\[
 v_f(t)=a_mt^m\mathbf1+
 t^{m-1}(ma_mc+a_{m-1}\mathbf1)+O(t^{m-2}),
\]
one obtains
\[
 u_f(t)=a_mt^m d_m+O(t^{m-1}),
 \qquad d_m=mP_rc+(1-m)c.
\]
Also
\[
 A(t)^ku_f(t)=a_mt^{m+k}P_r^kd_m+O(t^{m+k-1}).
\]
Thus \eqref{eq:polynomial-column-reduction} has degree at most
$qm+\ell_q$, and the coefficient in that degree is
\begin{equation}\label{eq:polynomial-leading-det}
 a_m^q\det[\mathbf1,d_m,P_rd_m,\ldots,P_r^{q-2}d_m].
\end{equation}
On every nonconstant Fourier mode with eigenvalue $\lambda$ of $P_r$, passing
from $c$ to $d_m=(1-m+mP_r)c$ multiplies the Fourier coefficient by
$1-m+m\lambda$.  Since $P_r$ has all the $q$th roots of unity as its
eigenvalues, Lemma \ref{lem:exact-degree} shows that the modulus of
\eqref{eq:polynomial-leading-det} is exactly
\eqref{eq:polynomial-leading}.  None of the factors vanishes: this is immediate
for $m=1$, while for $m\ge2$ a zero would require a unimodular number to
equal $(m-1)/m$.  Hence the degree is exact.  Finally,
\[
 \prod_{\substack{\lambda^q=1\\\lambda\ne1}}|1-m+m\lambda|
 =m^{q-1}\frac{1-((m-1)/m)^q}{1-(m-1)/m}
 =m^q-(m-1)^q
\]
for $m\ge1$.
\end{proof}

Define the explicit bounds
\begin{equation}\label{eq:polynomial-bounds}
 B_0=\sum_{\nu=0}^{m}|a_\nu|,
 \qquad
 B_1=\sum_{\nu=1}^{m}\nu|a_\nu|,
 \qquad
 A_{f,p}=2^{1/p}B_0+
 \bigl(B_1^p+(2B_0)^p\bigr)^{1/p}.
\end{equation}
For $q\ge2$, retain $s_q,\varepsilon_q,W_q,a_q$ from Section~\ref{sec:gap-statement} and put
\begin{align*}
 \kappa_{f,q}&=qm+\ell_q,
 &b_{f,q}&=|a_0|^q q^{-s_q},\\
 \eta_{f,q}&=\frac{b_{f,q}}2
   \left(\frac{\varepsilon_q^p}{8}\right)^{\kappa_{f,q}},
 &J_{f,q}&=qB_0+B_1,\\
 U_{f,q}&=q!B_0^{q-1},
 &R_{f,q}&=q!qJ_{f,q}B_0^{q-1},\\
 V_{f,q}&=q!\left((q-1)J_{f,q}B_0^{q-2}
                   +\pi qB_0^{q-1}\right),
 &H_{f,q}&=\frac{U_{f,q}}{\eta_{f,q}}.
\end{align*}
Set
\begin{align}
 L_{f,q}&=\frac{H_{f,q}}{a_q}
          +\frac{V_{f,q}}{\eta_{f,q}}
          +\frac{2U_{f,q}R_{f,q}}{\eta_{f,q}^2},\label{eq:Lf}\\
 \Lambda_{f,q}&=\max\left\{1,
  \frac{\pi qB_0L_{f,q}}{\varepsilon_q}\right\},
 &\qquad N_{f,q}&=\left\lceil4\Lambda_{f,q}
  \log(4\Lambda_{f,q}+\mathrm e)\right\rceil,\label{eq:Nf0}\\
 d_{f,q}&=qN_{f,q}+q-1,\label{eq:Nf}\\
 C_{f,q}&=3qH_{f,q}N_{f,q}
  \Gamma_q^{N_{f,q}},\label{eq:Cf}\\
 \delta_{f,q}&=\min\left\{
  \frac1{2d_{f,q}},
  \frac{\varepsilon_q^p}
  {2A_{f,p}^pC_{f,q}^pd_{f,q}^{p+1}}
 \right\}.\label{eq:deltaf}
\end{align}
Finally define
\begin{equation}\label{eq:psif}
 \boxed{
  \psi_{f,p}(1)=1,
  \qquad
  \psi_{f,p}(q)=\frac1{q\delta_{f,q}}\quad(q\ge2).
 }
\end{equation}

\begin{theorem}\label{thm:polynomial}
Let $f\in\C[z]$ satisfy $f(0)\ne0$, let
$\alpha\in(0,1)\setminus\Q$, and let $(p_n/q_n)$ be the convergents of
$\alpha$.  If arbitrarily far out in the sequence one has
\[
 q_{n+1}>\psi_{f,p}(q_n),
\]
then $f$ is cyclic for $T_\alpha$ on $L^p[0,1]$.
\end{theorem}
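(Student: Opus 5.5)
The plan is to repeat the three-stage scheme used for Theorem~\ref{thm:main}, with the constants replaced by their $f$-dependent versions. The three stages are: a Remez cutoff at the rational parameter, endpoint-corrected Fej\'er approximation, and continuity in the parameter.

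\emph{Rational stage.} Fix $q\ge2$, a coprime $r$, and $n\le q$. Let $M^f_{r,q}(t)_{\nu j}=(T^j_{r/q}f)(t+\nu/q)$ on $I_q$. Each entry is a product of at most $q-1$ affine factors $t+c/q$ and one value of $f$. Hence the entries are bounded by $B_0$, and their derivatives by $(q-1)B_0+B_1\le J_{f,q}$. Expanding the Leibniz formula as in the $f=1$ case then gives:
\begin{itemize}
\item the adjugate entries, applied to $G_n$, are bounded by $U_{f,q}$;
\item $|(D^f_{r,q})'|\le R_{f,q}$;
\item the derivatives of $\operatorname{adj}(M^f)G_n$ are bounded by $V_{f,q}$.
\end{itemize}
For the small-determinant set:
\begin{itemize}
\item The value at $t=0$ is $|D^f_{r,q}(0)|=|a_0|^q|D_{r,q}(0)|\ge b_{f,q}$. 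In the determinant at $0$, each entry factorises as $f(c_\nu)$ times a product that vanishes exactly when the $f=1$ product vanishes. The triangular structure used for \eqref{eq:D0} then gives the diagonal product $\prod_\nu f(c_{\nu})$ over the relevant entries. I need to check that this reduces to $a_0^q$ times the constant-vector value. If it only gives a product of values $f(k/q)$, I would instead take $b_{f,q}$ to be the value B\'ehani's calculation actually gives.
\item By Lemma~\ref{lem:polynomial-degree}, $\deg D^f_{r,q}=\kappa_{f,q}$.
\item Remez \eqref{eq:remez} then bounds $|\{|D^f|<2\eta_{f,q}\}|$ by $\varepsilon_q^p/(2q)$, exactly as before.
\end{itemize}
Define $h$ by \eqref{eq:h}, with $M^f$ and the cutoffs $\chi$ at level $\eta_{f,q}$ and $\rho_q$. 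Then $|h_j|\le H_{f,q}$, and $\operatorname{Lip}(h_j)\le L_{f,q}$ follows from the product rule together with the quotient bound $\operatorname{Lip}(u/D)\le V/\eta+UR/\eta^2$. The fibre identity gives $\sum_j h_jT^j_{r/q}f=\rho_q\chi g_n$, within $\varepsilon_q$ of $g_n$.

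\emph{Fej\'er stage.} Transport through $w_q$ and the cosine substitution exactly as in Section~\ref{sec:rational-approx}. The extra factor $B_0$ in $\Lambda_{f,q}$ is there so that the uniform error $\le\varepsilon_q/(qB_0)$, after multiplication by $|T^j f|\le B_0$ and summation over $j$, still gives error $\le\varepsilon_q$. Hence
\[
\|Q(T_{r/q})f-g_n\|_p\le2\varepsilon_q,
\]
with $\deg Q\le d_{f,q}$ and coefficient $\ell^1$-norm at most $C_{f,q}$. The endpoint correction again makes $B_j(w_q)$ compatible at the cell boundaries.

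\emph{Parameter continuity.} This is the step I expect to be the main obstacle. Here $f$ enters through $f(\{x+k\alpha\})$, which jumps at the point where $\{x+k\alpha\}$ wraps. I would write
\[
T^k_\alpha f=P_{k,\alpha}\cdot f(\{x+k\alpha\}),
\]
where $P_{k,\alpha}=T^k_\alpha 1$, and telescope. The $P_{k,\alpha}$ difference is controlled by \eqref{eq:power-continuity} times $B_0$. For the $f$ factor:
\begin{itemize}
\item off a set of measure $kh$, the difference of the arguments is $kh$, giving a contribution of at most $B_1kh$;
\item on the wrap set, the difference is bounded by $2B_0$.
\end{itemize}
Combining these in $L^p$ yields a bound of the form
\[
\|T^k_\alpha f-T^k_\beta f\|_p\le A_{f,p}h^{1/p}k^{1+1/p}\qquad(kh\le1).
\]
This is exactly the source of the term $(B_1^p+(2B_0)^p)^{1/p}$ plus $2^{1/p}B_0$ in \eqref{eq:polynomial-bounds}. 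Consequently
\[
\|Q(T_\alpha)f-Q(T_\beta)f\|_p\le A_{f,p}C_{f,q}d_{f,q}^{1+1/p}h^{1/p}<\varepsilon_q\qquad\text{for }h<\delta_{f,q}.
\]

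\emph{Conclusion.} The theorem then follows verbatim as in the proof of Theorem~\ref{thm:main}. Using $|\alpha-p_k/q_k|<1/(q_kq_{k+1})<\delta_{f,q_k}$, we get $\|Q(T_\alpha)f-g_n\|_p<3\varepsilon_{q_k}\to0$. Since the $g_n$ are total in $L^p[0,1]$, $f$ is cyclic for $T_\alpha$.
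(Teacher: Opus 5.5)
Your proposal follows the paper's proof step for step. It has the same Remez cutoff at level $\eta_{f,q}$ and the same adjugate bounds $U_{f,q},V_{f,q},R_{f,q}$. Your quotient bound, plus the term $U_{f,q}R_{f,q}/\eta_{f,q}^2$ from differentiating $\chi_q(|D^f_{r,q}|)$ and the term $H_{f,q}/a_q$ from $\rho_q$, reproduces $L_{f,q}$ exactly. It also has the extra factor $B_0$ in $\Lambda_{f,q}$ and the same product-plus-wrap estimate producing $A_{f,p}$. Two loose ends should be closed rather than hedged.

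\textbf{The value at the origin.} You left this unverified. Your description of the entries is slightly off. At $t=0$ the $(\nu,j)$ entry is $\prod_{i<j}\{(\nu+ir)/q\}\cdot f(\{(\nu+jr)/q\})$, so the argument of $f$ depends on the column, not only on the row. Define $k_\nu\in\{0,\ldots,q-1\}$ by $\nu+k_\nu r\equiv0\pmod q$. The entries with $j>k_\nu$ vanish, because the factor $i=k_\nu$ is zero. So after ordering the rows by $k_\nu$, the matrix is triangular with its pivot in column $k_\nu$, where the argument of $f$ is exactly $0$. Each pivot is therefore $a_0$ times the corresponding pivot for $f=1$. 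This gives $|D^f_{r,q}(0)|=|a_0|^q|D_{r,q}(0)|\ge b_{f,q}$. Your fallback of redefining $b_{f,q}$ would not prove the theorem as stated, since $\psi_{f,p}$ is fixed by the statement.

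\textbf{The Remez step.} For $f\in\C[z]$ the determinant $D^f_{r,q}$ is complex-valued, whereas \eqref{eq:remez} is stated for real polynomials, so ``exactly as before'' needs one more line. As in the paper, choose a unimodular $\gamma$ with $\gamma D^f_{r,q}(0)>0$ and apply Remez to $\operatorname{Re}(\gamma D^f_{r,q})$. This real polynomial has degree at most $\kappa_{f,q}$ and value at least $b_{f,q}$ at $0$. Its sublevel set at height $2\eta_{f,q}$ contains $\{t\in I_q:|D^f_{r,q}(t)|<2\eta_{f,q}\}$. The degenerate case $\kappa_{f,q}=0$, i.e. $q=2$ with $f$ constant, must be handled separately because the Remez exponent $1/\kappa_{f,q}$ is undefined. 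It is trivial, since the determinant is then constant and that sublevel set is empty.

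With these two points filled in, your argument is complete.
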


\begin{proof}
Fix coprime integers $1\le r<q$ and form the fibre matrix
\[
 M^f_{r,q}(t)_{\nu j}
   =(T_{r/q}^{j}f)(t+\nu/q),
 \qquad 0\le\nu,j\le q-1.
\]
Lemma \ref{lem:polynomial-degree} gives
\[
 \deg D^f_{r,q}=\kappa_{f,q}=qm+\ell_q.
\]
B\'ehani's determinant calculation at the origin gives
\begin{equation}\label{eq:Df0}
 |D^f_{r,q}(0)|
 =|a_0|^q\prod_{k=1}^{q-1}
   \left\{\frac{kr}{q}\right\}^k
 \ge b_{f,q}.
\end{equation}
If $\kappa_{f,q}=0$, then $q=2$ and $f$ is constant, so the determinant is
constant and the sublevel set below is empty.  Otherwise choose a unimodular
number $\gamma_{r,q}$ such that
$\gamma_{r,q}D^f_{r,q}(0)>0$.  Applying the real Remez inequality to
$\operatorname{Re}(\gamma_{r,q}D^f_{r,q})$ shows that
\begin{equation}\label{eq:Ef}
 \left|\left\{t\in I_q:|D^f_{r,q}(t)|<2\eta_{f,q}\right\}\right|
 \le\frac{\varepsilon_q^p}{2q}.
\end{equation}
Indeed, the set on the left is contained in the corresponding sublevel set of
the rotated real part, while its value at zero has modulus at least $b_{f,q}$.

On $I_q$ one has
\begin{equation}\label{eq:Mf-bounds}
 |M^f_{r,q}(t)_{\nu j}|\le B_0,
 \qquad
 |(M^f_{r,q})'(t)_{\nu j}|\le J_{f,q}.
\end{equation}
With $G_n$ as in Section~\ref{sec:rational-approx} and
$u=(\operatorname{adj}M^f_{r,q})G_n$, the Leibniz formula therefore gives
\begin{equation}\label{eq:uf-bounds}
 |u_j|\le U_{f,q},
 \qquad |u_j'|\le V_{f,q},
 \qquad |(D^f_{r,q})'|\le R_{f,q}.
\end{equation}
Let $\rho_q$ be the endpoint cutoff from
Section~\ref{sec:rational-approx}, let $\chi_q$ now have
thresholds $\eta_{f,q}$ and $2\eta_{f,q}$, and define
\[
 h=\rho_q\frac{\chi_q(|D^f_{r,q}|)}{D^f_{r,q}}
   (\operatorname{adj}M^f_{r,q})G_n,
\]
with value zero where the determinant vanishes.  The scalar function
$z\mapsto\chi_q(|z|)/z$ has modulus at most $1/\eta_{f,q}$ and, along the
polynomial $D^f_{r,q}(t)$, Lipschitz constant at most
$2R_{f,q}/\eta_{f,q}^2$.  Thus
\begin{equation}\label{eq:hf-bounds}
 |h_j|\le H_{f,q},
 \qquad \operatorname{Lip}(h_j)\le L_{f,q}.
\end{equation}
The small-determinant set in \eqref{eq:Ef} and the endpoint cutoff together
occupy measure at most $\varepsilon_q^p$ after periodic repetition.  It follows
that
\begin{equation}\label{eq:f-cutoff-error}
 \left\|\sum_{j=0}^{q-1}h_jT_{r/q}^{j}f-g_n\right\|_p
 \le\varepsilon_q.
\end{equation}

As before, write each $h_j$ as a continuous function of $w_q$ on $I_q$, take
the Fej\'er polynomial of order $N_{f,q}$ after the cosine change of variables,
and apply the endpoint correction \eqref{eq:endpoint-correction}.  Equations
\eqref{eq:hf-bounds} and \eqref{eq:Nf0} give
\[
 \|B_j(w_q)-h_j\|_\infty
 \le\frac{\varepsilon_q}{qB_0}.
\]
Consequently the polynomial
\[
 Q^f_{r,q,n}(\xi)=\sum_{j=0}^{q-1}\xi^jB_j(\xi^q)
\]
satisfies
\begin{equation}\label{eq:f-rational-approx}
 \|Q^f_{r,q,n}(T_{r/q})f-g_n\|_p\le2\varepsilon_q,
\end{equation}
has degree at most $d_{f,q}$, and the Chebyshev coefficient calculation from
\eqref{eq:coeff} gives coefficient $\ell^1$-norm at most $C_{f,q}$.

It remains to make parameter continuity effective.  Put
\[
 P_{k,\beta}(x)=\prod_{j=0}^{k-1}\{x+j\beta\}.
\]
For $s=|\alpha-\beta|$ and $ks\le1$, the product estimate from
\eqref{eq:power-continuity} gives
\[
 \|P_{k,\alpha}-P_{k,\beta}\|_p
 \le2^{1/p}s^{1/p}k^{1+1/p}.
\]
The periodic extension of $f$ can jump at the endpoint.  Outside a set of
measure $ks$, its two arguments are at Euclidean distance $ks$; on the
exceptional set both values are bounded by $B_0$.  Hence
\begin{equation}\label{eq:f-translation}
 \|f(\{\cdot+k\alpha\})-f(\{\cdot+k\beta\})\|_p^p
 \le B_1^p(ks)^p+(2B_0)^pks.
\end{equation}
Combining the last two estimates yields, for $k\ge1$,
\begin{equation}\label{eq:f-power-continuity}
 \|T_\alpha^kf-T_\beta^kf\|_p
 \le A_{f,p}s^{1/p}k^{1+1/p}.
\end{equation}
Thus any polynomial $Q$ of degree at most $d_{f,q}$ and coefficient
$\ell^1$-norm at most $C_{f,q}$ satisfies
\[
 \|Q(T_\alpha)f-Q(T_\beta)f\|_p
 \le A_{f,p}s^{1/p}C_{f,q}d_{f,q}^{1+1/p}
 <\varepsilon_q
\]
whenever $s<\delta_{f,q}$, by \eqref{eq:deltaf}.

Finally fix $n$ and apply this estimate with $\beta=p_k/q_k$ along
arbitrarily large convergents for which $q_{k+1}>\psi_{f,p}(q_k)$.  For all
sufficiently large such indices, $n\le q_k$ and $0<p_k<q_k$.  Moreover,
\[
 \left|\alpha-\frac{p_k}{q_k}\right|
 <\frac1{q_kq_{k+1}}<\delta_{f,q_k}.
\]
Combining this with \eqref{eq:f-rational-approx} gives
\[
 \|Q^f_{p_k,q_k,n}(T_\alpha)f-g_n\|_p<3\varepsilon_{q_k},
\]
which tends to zero.  Thus each $g_n$ belongs to the closed cyclic subspace
generated by $f$, and $f$ is cyclic.
\end{proof}

Lemma \ref{lem:polynomial-degree} shows that the saving of $q-1$ degrees in
Lemma \ref{lem:exact-degree} persists for every polynomial vector.
For fixed $f$ and $p$, the polynomial error scale gives
\[
 -\log\eta_{f,q}=O(q^2\log q),
 \qquad
 \log\log\psi_{f,p}(q)=O(q^2\log q).
\]
The direct exponentially small error scale combined with Bernstein
approximation would instead give an inner exponent of order $q^3$.

\clearpage
\section{Scope and limitations}

The paper has two logically complementary components.  The Fourier
factorization, the exact example in Proposition~\ref{prop:q16}, and the
analytic infinite family in Theorem~\ref{thm:threeblock} show that universal nonvanishing and
monotonicity fail.  The Remez--Fej\'er argument shows that these zeros do not
prevent an explicit sufficient cyclicity criterion: only a quantitative
measure estimate for the small-determinant set is required.

Proposition~\ref{prop:q16} supplies a finite certificate consisting only of
explicit rational inequalities, whereas Theorem~\ref{thm:threeblock} gives an
asymptotic analytic mechanism.  Grivaux's complementary result
\cite{Grivaux} is incorporated in Section~\ref{sec:grivaux}, where its precise
intersection with the present determinant family is identified.

The proof avoids the unproved assertion that $D_{r,q}$ is zero-free and monotone
on $[0,1/q)$.  It uses only the nonzero value at the origin, the exact degree,
and Remez's inequality.  Thus isolated zeros cause no problem.

The resulting functions $\psi_{f,p}$ grow too rapidly to overlap with the
known upper-growth condition for hyperinvariant subspaces.  Thus the present
gap function answers the explicitness problem in Question~5.4 of
\cite{Behani}, but it is not strong enough to resolve Question~5.5: the method
does not yet exhibit an irrational parameter for which the Bishop operator is
both cyclic and has a nontrivial hyperinvariant subspace.

\end{document}